\documentclass[11pt]{article}
\usepackage{amssymb,amsmath,accents}
\usepackage{amscd}
\usepackage{amsfonts,amsthm,mathrsfs}
\usepackage{setspace}
\usepackage{cases,empheq}
\usepackage{subcaption}
\usepackage{diagbox}
\usepackage[hyphens,allowmove]{url}
\usepackage{hyperref}
\usepackage{cleveref}
\usepackage{float}

\numberwithin{equation}{section}
\newtheorem{thm}{Theorem}
\newtheorem{cor}[thm]{Corollary}
\newtheorem{lem}[thm]{Lemma}
\newtheorem{prop}[thm]{Proposition}

\newtheorem{definition}[thm]{Definition}
\newtheorem{rem}[thm]{Remark}

\allowdisplaybreaks[4]

 \newcommand{\p}{\partial}
\newcommand{\norm}[1]{\left\lVert #1 \right\rVert}

\newcommand{\R}{\mathbb{R}}

\usepackage{enumitem}

\begin{document}

\title
{Solutions to a One-Dimensional Combustion-Type Free Boundary Problem via Maximal Regularity}

\author{Ken Furukawa$^1$, Yoshikazu Giga$^2$ and Naoto Kajiwara$^3$ \vspace{0.7em}\\ 
\centerline{{\small $^1$Faculty of Science, Academic Assembly}} \\
\centerline{{\small University of Toyama}} \\
\centerline{{\small 3190 Gofuku, Toyama-shi, Toyama 930-8555, Japan}} \vspace{0.7em}\\
\centerline{{\small $^2$Graduate School of Mathematical Sciences}} \\
\centerline{{\small The University of Tokyo}} \\
\centerline{{\small 3-8-1 Komaba, Meguro-ku, Tokyo 153-8914, Japan}} \vspace{0.7em}\\
\centerline{{\small $^3$Applied Physics Course, Department of Electrical, Electronic and Computer Engineering}} \\
\centerline{{\small Gifu University}} \\
\centerline{{\small 1-1 Yanagido, Gifu City, Gifu 501-1193, Japan}}}

\date{}

\maketitle
\thispagestyle{empty}
%
\footnote[0]{{\it Keywords:}
Free boundary problem; combustion-type problem; maximal $L^p$-$L^q$ regularity; parabolic equations; Schauder estimates; derivative formulation}

\begin{abstract}
We study a one-dimensional free boundary problem arising in combustion theory, where the motion of the interface is governed by a prescribed Neumann boundary flux and a zero Dirichlet boundary condition.
We treat both the half-line case and the bounded interval case.
For both settings, we employ maximal $L^p$-$L^q$ regularity as our main analytical tool.
In the half-line case, the solutions need not decay at infinity, even though the spatial derivatives belong to $L^q(\R_+)$. 
To handle the evolution law of the free boundary, we introduce a derivative formulation that avoids second-order boundary traces.
By combining maximal $L^p$-$L^q$ regularity and Schauder estimates, we establish the local-in-time existence, uniqueness, and regularity of solutions, as well as the evolution law of the free boundary. 
\end{abstract}

\maketitle

\section{Introduction}\label{sec:intro}
Free boundary problems of Stefan and combustion-type arise in phase transitions, flame propagation, and reaction-diffusion systems.
In this paper, we study a one-dimensional free boundary problem in which the motion of the interface is determined subject to a prescribed Neumann flux and a homogeneous Dirichlet condition. 
This problem may be regarded as a one-dimensional parabolic Bernoulli-type free boundary problem. 
See, e.g., \cite{BL,CV,Fried}. 
Mathematically, the main difficulty lies in the strong coupling between the parabolic equation and the evolution law for the free boundary.

We study the following free boundary problem. 
\begin{align}\label{eq:u}
	\begin{cases}
		\p_t u - \p_x^2 u = f(x,t), & s(t)<x<\infty,\ t>0,\\
		u(s(t),t)=0, & t>0,\\
		\p_x u(s(t),t)=\alpha, & t>0,\\
		u(x,0)=u_0(x), & x>0,\\
		s(0)=0,
	\end{cases}
\end{align}
where $\alpha>0$ and $f$ is a given heat source.
Here the free boundary $s(t)$ represents the interface determined by the prescribed boundary conditions, which are typical in combustion-type models. 
One of the main objectives of this paper is to establish a local-in-time existence and uniqueness theorem for problem \eqref{eq:u} under suitable compatibility conditions on the initial data. 
For the reader's convenience, we first state a simplified version of our main result. 
The full statement will be given in Theorem \ref{thm:main}. 
We denote by $\mathcal{S}(\R)$ the Schwartz space of rapidly decreasing smooth functions on $\R$. 
\begin{thm}
Assume that $f : \R\times[0,T] \to \R$ is bounded and H\"older continuous, and that $\p_x f(\cdot,t) \in \mathcal{S}(\R)$ uniformly for $t\in[0,T]$. 
Let $w_0 \in \mathcal{S}(\R)$ satisfy $w_0(0)=\alpha$, and define 
\[u_0(x)=\int_0^x w_0(y)dy.\]
Then the problem \eqref{eq:u} admits a unique classical solution $(u,s)$ on a time interval $(0,T^\ast)$ for some $T^\ast\in(0,T]$.
\end{thm}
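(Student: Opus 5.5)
The approach is to fix the free boundary, pass to the derivative $w=\p_x u$, and close a contraction argument in a maximal regularity class.

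\textbf{Step 1: reduction to a fixed domain.} Set $v(y,t)=u(y+s(t),t)$ on $y>0$. Then
\[
\p_t v - \dot s\,\p_y v - \p_y^2 v = f(y+s(t),t),\qquad v(0,t)=0,\qquad \p_y v(0,t)=\alpha .
\]
Take the formal trace of the equation at $y=0$. Since $\p_t v(0,t)=0$, this gives $-\alpha\dot s-\p_y^2v(0,t)=f(s(t),t)$. So the front law is
\[
\dot s=-\frac{1}{\alpha}\bigl(\p_y^2 v(0,t)+f(s(t),t)\bigr).
\]
This law involves a second-order trace, which is not controlled in $L^p$-$L^q$ classes. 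We therefore work with $w=\p_y v$, which satisfies
\[
\p_t w-\dot s\,\p_y w-\p_y^2 w=\p_x f(y+s,t),\qquad w(0,t)=\alpha,\qquad w(\cdot,0)=w_0 .
\]
The front law becomes $\dot s=-\alpha^{-1}\bigl(\p_y w(0,t)+f(s,t)\bigr)$, which is a first-order trace of $w$. Nondecay of $u$ is harmless because only $w\in L^q(\R_+)$ is needed. We recover $v(y,t)=\int_0^y w$.

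\textbf{Step 2: linear theory.} Subtract an extension of the Dirichlet datum, e.g. $\alpha e^{-y}$, to make the boundary condition homogeneous. Use maximal $L^p$-$L^q$ regularity of the Dirichlet Laplacian on $\R_+$, so that $w-\alpha e^{-y}\in W^{1,p}(0,T;L^q)\cap L^p(0,T;W^{2,q})$. The compatibility condition $w_0(0)=\alpha$ is exactly what places $w_0$ in the right trace space $B^{2-2/p}_{q,p}$ when $2-2/p>1/q$. Choose $p,q$ large so that the trace $\p_y w(0,\cdot)$ lies in a H\"older space $C^{\gamma}$; then $s\in C^{1,\gamma}$.

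\textbf{Step 3: fixed point.} Define a map on pairs $(\bar w,\bar s)$ in a ball of $\mathbb{E}_T\times C^{1}([0,T])$ with $\bar s(0)=0$. First solve the linear problem with $\dot{\bar s}\,\p_y\bar w$ and $\p_x f(\cdot+\bar s,t)$ as right-hand side, obtaining $w$. Then set
\[
s(t)=-\alpha^{-1}\int_0^t \bigl(\p_y w(0,\tau)+f(\bar s(\tau),\tau)\bigr)d\tau .
\]
The term $\dot{\bar s}\p_y\bar w$ is small in $L^p(0,T;L^q)$ by a factor $T^{\theta}$. The Lipschitz difference of $\p_x f(\cdot+\bar s)$ is handled by the Schwartz bound on $\p_x^2 f$ and the H\"older continuity of $f$. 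This gives a contraction for small $T^\ast$, hence existence and uniqueness in the class.

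\textbf{Step 4: recovering a classical solution.} Use the Schauder estimates to upgrade $w$ to a classical solution, so that the trace identity for $v$ is legitimate. Then verify that $v=\int_0^y w$ solves the original problem. Differentiating gives $\p_y(\text{equation for }v)=0$, so the residual $r=\p_t v-\dot s\p_y v-\p_y^2v-f$ is independent of $y$. By the front law, $r(0,t)=0$, so $r\equiv 0$. Together with $v(0,t)=0$ and $\p_yv(0,t)=w(0,t)=\alpha$, this gives the solution $(u,s)$ of \eqref{eq:u}.

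\textbf{Main obstacle.} I expect the hardest part to be the contraction estimate for the coupling term $\dot s\,\p_y w$ together with the boundary trace $\p_y w(0,\cdot)$ in H\"older norm with small-time constants. This requires the initial data to be subtracted via a reference solution, so that the constants do not blow up as $T\to0$. Uniqueness of classical solutions is obtained by running the same transformation, provided they have the regularity to lie in the class.
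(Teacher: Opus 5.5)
Your plan is correct, and Steps 1--3 are essentially the paper's derivative formulation and maximal-regularity fixed point. The genuine difference is Step 4, how you get back from $w$ to $v$.

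The paper does not integrate $w$ on the half-line. It even remarks that indefinite integration does not extend there without decay assumptions, and uses it only on the bounded interval. Instead, for the $s$ from the fixed point it solves the linear Neumann problem for $v$ by Schauder theory; this is where $v_0\in C^{2+2\gamma}$, $s'\in C^\beta$ and H\"older continuity of $f,\p_x f$ enter. It then shows that $\p_y v$ solves the Dirichlet problem for $w$, identifies $\p_y v=w$ by uniqueness, and finally gets $v(0,t)=0$ from $\p_t v(0,t)=0$.

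Your residual argument shows that no decay is needed, because on the half-line nothing has to be imposed at $y=\infty$. The conditions $v(0,t)=0$ and $\p_y v(0,t)=\alpha$ are built into $v=\int_0^y w$, $\p_y r=0$ is the $w$-equation, and $r(0,t)=0$ is the front law. The argument also makes the Schauder step unnecessary. Integrating the $w$-equation over $(0,y)$ and using the front law gives $\p_t v=\p_y w+\dot s\,w+f(y+s(t),t)$. Since $\mathbb{E}_T\hookrightarrow C([0,T];BUC^1(\overline{\R_+}))$ (from $2/p+1/q<1$), and $\dot s$ and $f$ are continuous, you already have $v\in C^{2,1}$ and hence a classical $u$.

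This matters for your Step 4 as written. A Schauder upgrade of the Dirichlet problem for $w$ up to $t=0$ needs the first-order compatibility condition $w_0''(0)+\dot s(0)w_0'(0)+\p_x f(0,0)=0$, which is not assumed. If you want H\"older regularity, apply Schauder only on $[\varepsilon,T^\ast]$.

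The trade-off is this: the paper's route yields the quantitative class $v\in C^{2+2\tilde\gamma,1+\tilde\gamma}$, while yours is shorter and needs less regularity of $f$ and $v_0$.

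Two smaller points on Step 3. First, the paper takes $s\in W^{1,p_2}$ with $p_2>p_1$ precisely so that it gets the factors $T^{1/p_1-1/p_2}$ and $T^{1/p_2}$ while feeding the old iterate into the front law. With your choice $s\in C^1$, the quantity $\norm{\p_y(w_1-w_2)(0,\cdot)}_{L^\infty(0,T)}$ carries no power of $T$. Your contraction closes only because you put the updated $w$ into the front law, so smallness is inherited from $\norm{w_1-w_2}_{\mathbb{E}_T}\lesssim T^{1/p}(\cdots)$; keep that ordering. Second, the Lipschitz bound for $f(\bar s_1(t),t)-f(\bar s_2(t),t)$ comes from $\p_x f\in L^\infty$, which is part of the Schwartz hypothesis. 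H\"older continuity of $f$ alone would not give a contraction.
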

\noindent
From a physical viewpoint, we prove that $u$ is positive when $f$ and $u_0$ are positive by the maximum principle. 
This is stated in Remark \ref{rem:positivity}. 

The present paper can be regarded as a continuation of our previous work \cite{FGK}, but it explores a different analytical direction. 
In \cite{FGK}, we focused on the effect of incompatible initial data and bounded external forces $f$, and in particular we proved a non-existence result for the case $u_0 \equiv 0$ under bounded forcing. 
We also constructed a unique bounded self-similar solution on the half-line when the external force is of the form $f=h/\sqrt{t}$, where $h$ is a constant. 
The analysis in \cite{FGK} was carried out on the half-line and a bounded interval as well as special multi-dimensional domains. 
In contrast, the present paper aims to study the local-in-time well-posedness for compatible initial data by a completely different analytical approach in one-dimensional space. 
We treat both the half-line case and the case of a bounded interval with two free boundaries. 
The theorem for a bounded interval is given in Theorem \ref{thm:main2}. 
The proofs of the these two theorems are different, and the latter admits a simpler proof. 
In the half-line setting, we allow solutions that do not necessarily decay at spatial infinity, although their first spatial derivatives are required to decay. 

\subsection*{Strategy of the paper}
To analyze the problem, we first transform the free boundary problem into a fixed-domain formulation by introducing a moving coordinate system, i.e., $y=x-s(t)$ and $v(y,t):=u(s(t)+y,t)$. 
Formally taking the trace of the transformed equation at the moving boundary leads to an evolution equation for the free boundary:
\[\alpha s'(t)= -\p_y^2 v(0,t)-f(s(t),t).\]
However, this procedure leads to a {\it second-order} boundary trace, which is not compatible with the standard maximal $L^p$ regularity framework, even for linear parabolic equations. 

To overcome this difficulty, we reformulate the problem in terms of the derivative 
\[w = \p_y v\]
where $v$ is the transformed unknown function above. 
This reformulation eliminates the second-order boundary trace and replaces it with a system consisting of
\begin{itemize}
\item a second-order parabolic equation for $w$, and
\item an ordinary differential equation for the free boundary $s$, with a first-order trace of $w$.
\end{itemize}
See the equation \eqref{eq:w} in the following section. 
Here, the equation for $w$ is obtained by differentiating the equation for $v$ with respect to $y$. 
The key observation is that the resulting system involves only lower-order boundary traces which are compatible with the maximal $L^p$-$L^q$ regularity framework. 
This allows us to apply a fixed point argument in appropriate function spaces. 

Our analysis on the half-line proceeds in three steps:
\begin{itemize}
\item[Step\,1] We establish local well-posedness of the derivative system for $(w,s)$ in $L^p$-based spaces using maximal regularity.
\item[Step\,2]  For the obtained free boundary $s(t)$, we solve a {\it linear} parabolic problem for $v$ using Schauder theory.
\item[Step\,3]  We identify $w = \partial_y v$ by a uniqueness argument and recover a classical solution to the original problem. 
\end{itemize}
In the first step we use the fixed point theorem. 
To estimate the nonlinear terms, we use two different $L^p$ spaces with respect to time. 
As for Steps\,2 and 3, in the case of a bounded interval with two free boundaries, the recovery of $v$ from $w$ can be achieved by indefinite integration, and Schauder theory is not needed at all. 
This provides a new functional analytic framework for the problem, which avoids the limitations of direct approaches based on maximal regularity. 

An alternative approach would be to work in a fractional Sobolev framework. 
However, we need to carefully track the time dependence throughout the calculations. 
This becomes technically involved when dealing with trace operators and embeddings in fractional Sobolev spaces.
For these reasons, we do not adopt a fractional Sobolev approach. 
\subsection*{Related literature}
We review related works. 
There are so many related works, so our review is not at all exhaustive. 
More related articles are found in a review paper \cite{GHV} or a recent paper \cite{FGK}. 

Combustion-type free boundary problems arise as high activation energy limits of reaction--diffusion equations. 
In this limit, the reaction zone becomes a moving interface, and the temperature satisfies the heat equation away from the interface. 
The classical model corresponds to the homogeneous case $f\equiv0$. 
In this paper, we consider the more general problem with a forcing term $f$, which represents external heat sources or sinks. 
In most of the references mentioned below, the homogeneous case is considered. 
Non-homogeneous problems have been studied only in limited settings, including the authors' works \cite{FGK, FGK2}. 
Caffarelli and V\'azquez \cite{CV} studied a multidimensional combustion-type free boundary problem in the homogeneous case $f=0$. 
They established existence of weak solutions together with several qualitative properties of the free boundary. 

Classical solutions for multidimensional combustion-type free boundary problems with $f=0$ were investigated by Andreucci and Gianni \cite{AG}. 
The problem is also considered as a singular limits of two-phase parabolic problems; see Caffarelli, Lederman, and Wolanski \cite{CLW, CLW2}. 
For the physical background and mathematical modeling of combustion phenomena, we refer to the lecture notes by Buckmaster and Ludford \cite{BL} and Williams \cite{W}. 

Early studies on free boundary problems for the heat equation go back to the work of Ventcel' \cite{V}. 
Free boundary problems of Stefan or combustion-type have been extensively discussed in the book of Friedman \cite{Fried}. 
Here, the Stefan boundary condition refers to $s'=-\partial_x u(s(t),t)$, in contrast to the prescribed flux condition considered in this paper. 
In the Stefan problem, the interface velocity is given explicitly by the first derivative of the function $u$. 
In contrast, for combustion-type problems the interface velocity is determined only implicitly through the solution of the parabolic equations. 
As shown above, the evolution law for the free boundary involves a second-order boundary trace of the solution. 
The boundary condition $\partial_x u=\alpha$ can be viewed as a one-dimensional counterpart of the Bernoulli-type condition $|\nabla u|=\alpha$ in higher dimensions. 

In the elliptic setting, problems (with $f=0$) with a related structure are known as Bernoulli free boundary problems. 
Elliptic-parabolic free boundary problems arising in combustion theory have been extensively studied by Hulshof and collaborators, including results on interface continuity \cite{H1}, spherical symmetry and convergence to traveling waves \cite{HH, H2}.  
Focusing phenomena were further analyzed by Hilhorst and Hulshof \cite{HH2}. 
Other approaches based on weak solutions can be found in \cite{CV, K}. 
A planar travelling wave solution exists, and its nonlinear stability was established by Brauner, Hulshof and Lunardi \cite{BHL}. 
Their analysis relies on a suitable decomposition of the solution together with optimal decay estimates and the Cole-Hopf transformation. 
Extinction and self-similar behavior of spherical and annular flames were discussed in detail by Galaktionov, Hulshof and V\'azquez \cite{GHV} which includes history of researches. 
Most of these works are concerned with well-posedness, qualitative properties of interfaces, or asymptotic behavior of particular solutions when $f=0$, while our paper treats strong solutions based on maximal $L^p$ regularity with a general forcing term $f$. 

In a forthcoming paper \cite{FGK2}, by introducing a mild solution framework for the spatial derivative of the solution, the authors establish local well-posedness under minimal regularity assumptions on both the forcing term and the initial data. 
They also introduce a derivative system and allow solutions that do not necessarily decay at spatial infinity.
Moreover, the interface velocity satisfies the property that $t^{1/2}s'(t)$ remains bounded as $t\to+0$, where the exponent $1/2$ is consistent with the natural parabolic scaling of the problem. 
\subsection*{Organization of the paper}
The paper is organized as follows.
In Section \ref{sec:reformation}, we reformulate the original free boundary problem on a fixed domain and introduce a derivative formulation.
Section \ref{sec:w} is devoted to the local well-posedness of the derivative system in $L^p$-based spaces.
In Section \ref{sec:reg}, we consider the solution in H\"older spaces again by means of Schauder theory.
Section \ref{sec:v} establishes the identification $w=\partial_y v$ and we recover a classical solution to the original free boundary problem. 
Specifically, Section \ref{sec:w}, \ref{sec:reg} and \ref{sec:v} correspond to the above Steps\,1,2,3, respectively. 
Finally, in Section \ref{sec:bounded}, we study the case of a bounded interval. 
The main result is the existence and uniqueness theorem for the problem. 

\section{Reformulation of the Free Boundary Problem}\label{sec:reformation}

\subsection{Fixing the domain}
Define
\[v(y,t):=u(s(t)+y,t), \qquad y>0.\]
We have $\p_y v=\p_x u$, $\p_y^2 v= \p_x^2 u$ and  $\p_t v=\p_t u + s'\p_x u$. 
Formally differentiating the boundary condition $u(s(t),t)=0$ with respect to $t$, we obtain 
\[\p_t u + s' \p_x u(s(t),t)=0.\]
Hence, 
\[\alpha s'+\p_y^2 v(0,t) + f(s(t),t)=0.\]
Therefore the pair of functions $(v,s)$ satisfies
\begin{equation}\label{eq:v}
	\begin{cases}
		\p_t v - \p_y^2 v = s'(t)\p_y v + f(s(t)+y,t), &y>0,\ t>0,\\
		\alpha s'(t)= -\p_y^2 v(0,t)-f(s(t),t), &t>0,\\
		\p_y v(0,t)=\alpha,&t>0,\\
		v(y,0)=v_0(y),& y>0,\\
		s(0)=0, 
	\end{cases}
\end{equation}
where we have set $v_0(y):=u_0(y)$. 
Taking the trace at $y = 0$ yields $\p_t v(0,t) = 0$, which suggests that $v(0,t) = 0$ provided $v_0(0) = 0$. 
However, at this stage, the operation is only formal, since the trace $\p_y^2 v(0,t)$ is not a priori well-defined in the maximal $L^p$-$L^q$ setting. 
This fact will be rigorously justified later after we prove that $v$ enjoys sufficient regularity at the boundary.
More precisely, we adopt Schauder theory and obtain H\"older regularity. 
Therefore we recover $u(s(t), t) = v(0, t) = 0$. 
To do so, we need to specify the function spaces precisely and we need to solve the problem \eqref{eq:v}. 
We reformulate the problem in terms of $w(=\p_y v)$ in the next subsection and we solve the system for $(w,s)$ in the next section. 

\subsection{Derivative formulation}
The main difficulty in treating \eqref{eq:v} by maximal $L^p$-$L^q$ regularity is the appearance of the second-order boundary trace $\p_y^2 v(0,t)$ in the free boundary condition. 
Such a term is not directly compatible with the standard theory. 

To overcome this difficulty, we introduce a derivative formulation by setting 
\[w:=\p_y v.\]
Then, by letting $w_0(y)=\p_y v_0(y)$, the function $w$ and $s$ satisfies 
\begin{equation}\label{eq:w}
	\begin{cases}
		\p_t w - \p_y^2 w = s'(t)\p_y w + \p_x f(s(t)+y,t), &y>0,\ t>0,\\
		\alpha s'(t)= -\p_y w(0,t)-f(s(t),t), &t>0,\\
		w(0,t)=\alpha,&t>0,\\
		w(y,0)=w_0(y),&y>0,\\
		s(0)=0, 
	\end{cases}
\end{equation}
which is obtained by differentiating the first equation in \eqref{eq:v} with respect to $y$.  
Here we used the identity $\p_y f(s(t)+y,t)=\p_x f(s(t)+y, t)$. 
We note that, at this stage, the equivalence between \eqref{eq:v} and \eqref{eq:w} has not yet been justified because we do not specify the function spaces. 
However, we establish solvability of \eqref{eq:w} in the maximal $L^p$-$L^q$ framework in the next section. 
We emphasize that the boundary term $\p_y w(0,t)$ involves only first-order trace, which is well-defined in this  framework and thus compatible with the standard maximal regularity theory. 
The identification $w=\partial_y v$ will be rigorously established in Section \ref{sec:v}.

\section{Local Well-posedness for the derivative equation \eqref{eq:w}} \label{sec:w}
In this section, we establish local well-posedness for the derivative system \eqref{eq:w}. 
The key point is the use of two different $L^p$ spaces in time. 
The system is viewed as a coupled parabolic-ODE system, where the nonlinear terms are of lower-order and can be controlled by suitable product and trace estimates.
This structure allows us to apply the Banach fixed point theorem in appropriate $L^p$-based spaces.

We begin by introducing the definitions of function spaces and some estimates. 
Let $1<p_1,p_2,q<\infty$ and $T\in(0,\infty)$, $\R_+:=(0,\infty)$. 
We define
\begin{align*}
F_w&:=L^{p_1}(0,T;L^q(\R_+)), \\
E_w&:=W^{1,p_1}(0,T;L^q(\R_+))\cap L^{p_1}(0,T;W^{2,q}(\R_+)), \\
X_w&:=B^{2(1-1/p_1)}_{q,p_1}(\R_+), \\
F_s&:=L^{p_2}(0,T), \quad E_s:=W^{1,p_2}(0,T).
\end{align*}
When necessary, we explicitly write $F_w(0,T)$, $E_w(0,T)$, $F_s(0,T)$ and $E_s(0,T)$ to indicate the time interval explicitly. 
The exponent $p_1$ is used for the parabolic part, and $p_2$ is used for the ODE. 
The space $X_w$ is chosen as the natural trace space associated with $E_w$ in the framework of maximal $L^p$-$L^q$ regularity.
See \cite{PS}. 

Throughout this paper, we use the following exponent condition: 
\begin{center}
{\rm(A)}\quad Let $1<p_1,p_2, q<\infty$ satisfy $2/p_1+1/q<1$ and $p_1<p_2$. 
\end{center}
This ensures the embedding $X_w\subset BUC^1[0,\infty)$ and the boundary value of any function in $X_w$ at $y=0$. 

The following proposition gives the key estimates needed to control the nonlinear coupling terms in the fixed point argument.
\begin{prop}[Product and trace estimates]\label{prop:product}
Assume the condition {\rm (A)}. 
Then there exists a constant $C>0$, depending only on $p_1, p_2$ and $q$, such that the following estimates hold for all $T>0$. 
\begin{enumerate}[label=\upshape(\roman*)]
\item Product and trace estimates. \\
Let $\varphi\in E_w$ and $\xi\in E_s$. 
Then 
\begin{align*}
\norm{\xi'\p_y\varphi }_{F_w}
	&\le C T^{\frac{1}{p_1}-\frac{1}{p_2}}\bigl(\norm{\varphi}_{E_w}+\norm{\varphi|_{t=0}}_{X_w}\bigr)\norm{\xi}_{E_s},\\
\norm{\p_y\varphi|_{y=0}}_{F_s}
	&\le C T^{\frac{1}{p_2}}\bigl(\norm{\varphi}_{E_w}+\norm{\varphi|_{t=0}}_{X_w}\bigr).
\end{align*}
 \item Difference estimates (contraction properties).\\
Let $\varphi_1, \varphi_2\in E_w$ with $\varphi_1|_{t=0}=\varphi_2|_{t=0}$ and $\xi_1, \xi_2\in E_s$.  
Then 
\begin{align*}
\norm{\xi'_1\p_y\varphi_1-\xi'_2\p_y\varphi_2}_{F_w}
	&\le C T^{\frac{1}{p_1}-\frac{1}{p_2}} \left\{\bigl(\norm{\varphi_1}_{E_w}+\norm{\varphi_1|_{t=0}}_{X_w}\bigr)\norm{\xi_1-\xi_2}_{E_s}\right.\\
	&\qquad \qquad  \left.+ \norm{\varphi_1-\varphi_2}_{E_w}\norm{\xi_2}_{E_s}\right\},\\
\norm{\p_y(\varphi_1-\varphi_2)|_{y=0}}_{F_s}
	&\le C T^{\frac{1}{p_2}}\norm{\varphi_1-\varphi_2}_{E_w}.
\end{align*}
\end{enumerate}
\end{prop}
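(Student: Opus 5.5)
The plan is to reduce everything to a single uniform-in-time bound on $\p_y\varphi$, with a constant that does not depend on $T$, namely
\[
\sup_{0<t<T}\norm{\p_y\varphi(\cdot,t)}_{BUC(\overline{\R_+})}\le C\bigl(\norm{\varphi}_{E_w}+\norm{\varphi|_{t=0}}_{X_w}\bigr). \tag{$\ast$}
\]
Once ($\ast$) is available, both estimates follow from H\"older's inequality in time.

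\textbf{Step 1: proof of ($\ast$).} Maximal regularity theory gives the embedding $E_w\hookrightarrow BUC([0,T];X_w)$. Under (A), $2(1-1/p_1)-1/q>1$, so the Besov embedding gives $X_w\hookrightarrow BUC^1[0,\infty)$. The catch is the constant: if $E_w$ is normed on $(0,T)$, the embedding constant blows up as $T\to0$. I would remove this with the standard splitting. Write $\varphi=e^{t\Delta_N}\tilde\varphi_0+(\varphi-e^{t\Delta_N}\tilde\varphi_0)$, where $\tilde\varphi_0$ is an extension of $\varphi|_{t=0}$ and $\Delta_N$ is a fixed generator, for instance the Neumann Laplacian after even reflection, so that the semigroup part is controlled by $\norm{\varphi|_{t=0}}_{X_w}$ for all times. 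The remainder vanishes at $t=0$. A function in ${}_0E_w(0,T)$ extends to $(0,\infty)$ by even reflection across $t=T$ (then truncation) with norm bounded independently of $T$. The embedding constant on the half-line in time is then $T$-independent. This is exactly why the initial trace norm appears on the right-hand side, and I expect it to be the main technical point.

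\textbf{Step 2: the product estimate.} By ($\ast$),
\[
\norm{\xi'\p_y\varphi}_{L^{p_1}(0,T;L^q)}\le \sup_t\norm{\p_y\varphi}_{L^\infty}\,\norm{\xi'}_{L^{p_1}(0,T)}.
\]
This is not quite enough: $L^q$ in $y$ requires $\p_y\varphi(t)\in L^q$, not $L^\infty$. I would use instead $X_w\hookrightarrow W^{1,q}(\R_+)$, valid since $2(1-1/p_1)>1$, together with the same $T$-uniform trace embedding. This gives $\sup_t\norm{\p_y\varphi(t)}_{L^q}\le C(\norm{\varphi}_{E_w}+\norm{\varphi|_{t=0}}_{X_w})$. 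Then H\"older in time with $p_1<p_2$ yields
\[
\norm{\xi'}_{L^{p_1}(0,T)}\le T^{1/p_1-1/p_2}\norm{\xi'}_{L^{p_2}}\le T^{1/p_1-1/p_2}\norm{\xi}_{E_s}.
\]

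\textbf{Step 3: the trace estimate.} Here
\[
\norm{\p_y\varphi(0,\cdot)}_{L^{p_2}(0,T)}\le T^{1/p_2}\sup_t|\p_y\varphi(0,t)|,
\]
and ($\ast$) bounds the supremum.

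\textbf{Step 4: difference estimates.} For the products, write
\[
\xi_1'\p_y\varphi_1-\xi_2'\p_y\varphi_2=(\xi_1-\xi_2)'\p_y\varphi_1+\xi_2'\p_y(\varphi_1-\varphi_2)
\]
and apply Step 2 to each term. Since $\varphi_1-\varphi_2$ has zero initial trace, its $X_w$-term drops out, and this uses the $T$-independent embedding for ${}_0E_w$ from Step 1. The trace difference follows in the same way from Step 3.
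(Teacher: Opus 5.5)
Your route is the paper's route: split off a semigroup part carrying the initial value, apply the $T$-independent trace embedding to the remainder in ${}_0E_w(0,T)$, and finish with $X_w\hookrightarrow W^{1,q}(\R_+)\cap BUC^1[0,\infty)$ and H\"older in time. Your time-reflection extension of ${}_0E_w(0,T)$ is fine, and Steps 2--4 are correct once ($\ast$) holds.

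The gap is in the one concrete choice you make in Step 1, which is exactly the step you call the crux. Let $\Phi$ denote the semigroup part. To get $\norm{\varphi-\Phi}_{E_w}\le\norm{\varphi}_{E_w}+C\norm{\varphi|_{t=0}}_{X_w}$ you need $\Phi\in E_w(0,T)$ with $\norm{\Phi}_{E_w(0,T)}\le C\norm{\varphi|_{t=0}}_{X_w}$. Controlling $\Phi$ in $W^{1,q}$ or $L^\infty$ for all times is not enough.

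The Neumann semigroup (equivalently, the whole-line heat flow of the even reflection) fails this under (A) whenever $\p_y\varphi_0(0)\neq0$. Every element of $E_w$ lies in $C([0,T];X_w)\subset C([0,T];BUC^1[0,\infty))$, so $t\mapsto\p_y\varphi(0,t)$ is continuous at $t=0$. But $\p_y\Phi(0,t)=0$ for $t>0$.

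Concretely, take $\varphi_0$ smooth and compactly supported. Its even reflection has a kink at $0$, so $\p_y^2\Phi(\cdot,t)$ contains $2\p_y\varphi_0(0)$ times the heat kernel. The heat kernel's $L^q$ norm is of order $t^{-(1-1/q)/2}$, which is not in $L^{p_1}(0,T)$ precisely because $2/p_1+1/q<1$. In other words, even reflection does not preserve $B^{2(1-1/p_1)}_{q,p_1}$ once $2(1-1/p_1)>1+1/q$.

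The paper's choice $e^{t\Delta_D}$ has the same defect in a stronger form. There $e^{t\Delta_D}\varphi_0$ vanishes at $y=0$ for $t>0$, while in the application $\varphi_0(0)=w_0(0)=\alpha\neq0$.

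The remedy is to impose no boundary condition on the reference part. Take an extension operator bounded $L^q(\R_+)\to L^q(\R)$ and $W^{2,q}(\R_+)\to W^{2,q}(\R)$. For example, set $\mathcal{E}\varphi_0(x)=3\varphi_0(-x)-2\varphi_0(-2x)$ for $x<0$, which matches value and first derivative at $0$. By interpolation it is then bounded $X_w\to B^{2(1-1/p_1)}_{q,p_1}(\R)$.

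Now set $\Phi(t):=\bigl(e^{t(\Delta-1)}\mathcal{E}\varphi_0\bigr)|_{\R_+}$, with $\Delta$ the whole-line Laplacian. Since $1-\Delta$ is invertible and sectorial on $L^q(\R)$, the semigroup characterization of real interpolation spaces gives two bounds:
\begin{itemize}
\item $\Phi\in W^{1,p_1}(0,\infty;L^q(\R_+))\cap L^{p_1}(0,\infty;W^{2,q}(\R_+))$ with norm at most $C\norm{\varphi_0}_{X_w}$;
\item $\sup_{t\ge0}\norm{\Phi(t)}_{X_w}\le C\norm{\varphi_0}_{X_w}$.
\end{itemize}
The shift by $-1$ makes these bounds uniform in all $T>0$, as the statement requires, rather than only for $T\le1$. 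With this $\Phi$, your Steps 1--4 go through as written.
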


\begin{proof}
By H\"older's inequality, we obtain
\begin{align*}
&\norm{\xi' \p_y \varphi}_{F_w}\\
	\le &C\norm{\varphi}_{L^\infty(0,T; W^{1,q}(\R_+))}\norm{\xi'}_{L^{p_1}(0,T)} \\
	\le &C T^{\frac{1}{p_1}-\frac{1}{p_2}}\Bigl(\norm{\varphi - e^{t\Delta_D}\varphi|_{t=0}}_{L^\infty(0,T; W^{1,q}(\R_+))}+\norm{e^{t\Delta_D}\varphi|_{t=0}}_{L^\infty(0,T; W^{1,q}(\R_+))}\Bigr)\norm{\xi}_{E_s},
\end{align*}
where $e^{t\Delta_D}$ denotes the analytic semigroup generated by the Dirichlet Laplacian on $L^q(\R_+)$.

We recall the trace theorem 
\begin{align*}
E_w&=W^{1,p_1}(0,T; L^q(\R_+))\cap L^{p_1}(0,T; W^{2,q}(\R_+))\\
&\hookrightarrow BUC\bigl([0,T]; B^{2(1-1/p_1)}_{q,p_1}(\R_+)\bigr)=BUC\bigl([0,T];X_w\bigr),
\end{align*}
and that the embedding constant is independent of $T$ provided that the initial value at $t=0$ vanishes.
Therefore,
\begin{align*}
&\norm{\xi' \p_y \varphi}_{F_w}\\
	\le &C T^{\frac{1}{p_1}-\frac{1}{p_2}}\Bigl(\norm{\varphi - e^{t\Delta_D}\varphi|_{t=0}}_{E_w}+\norm{e^{t\Delta_D}\varphi|_{t=0}}_{L^\infty(0,T; W^{1,q}(\R_+))}\Bigr)\norm{\xi}_{E_s} \\
	\le &C T^{\frac{1}{p_1}-\frac{1}{p_2}}\Bigl(\norm{\varphi}_{E_w}+\norm{\varphi|_{t=0}}_{X_w}\Bigr)\norm{\xi}_{E_s},
\end{align*}
where we have used the embedding $X_w\subset W^{1,q}(\R_+)$ and the boundedness of the semigroup $e^{t\Delta_D}$.
For properties of $e^{t\Delta_D}$, see \cite{DHP, PS}. 

Similarly, we estimate the boundary term.
By the embedding $X_w\subset BUC^1(\overline{\R_+})$ again, we have
\begin{align*}
\norm{\p_y \varphi|_{y=0}}_{F_s}
 	&\le C T^{\frac{1}{p_2}}\norm{\varphi}_{L^\infty(0,T; X_w)} \\
	&\le C T^{\frac{1}{p_2}}\Bigl(\norm{\varphi}_{E_w}+\norm{\varphi|_{t=0}}_{X_w}\Bigr).
\end{align*}
The second difference estimates can be proved in the same manner, since $\varphi_1|_{t=0}=\varphi_2|_{t=0}$.
\end{proof}
The factors involving powers of $T$ arise from time integration and play a crucial role in the contraction argument below. 
In order to control the non-homogeneous term $f$ in maximal regularity class, we use the following condition. 
\begin{center}
{\rm (F)}\quad Let $f:\R\times[0,T]\to\R$ satisfy $\begin{cases}
\hspace{4mm}f\in L^{p_2}(0,T;L^\infty(\R)), \\
\p_x f\in L^{p_1}(0,T;L^q(\R))\cap L^\infty(0,T;L^\infty(\R)), \\
\p_x^2 f \in L^\infty(0,T;L^q(\R)). 
\end{cases}$
\end{center}
The assumption on $\p_x^2 f$ is used only to control the composition $\p_x f(s(t)+y, t)$ in the contraction argument. 

\begin{thm}[Local existence for $(w,s)$]\label{thm:w}
Assume the conditions {\rm (A)} and {\rm (F)}. 
Let 
\[w_0\in X_w, \quad w_0(0)=\alpha.\]
Then there exists $T^\ast > 0$ such that the derivative system \eqref{eq:w} admits a unique local-in-time solution 
\[(w, s)\in E_w(0,T^\ast)\times E_s(0,T^\ast)\]
satisfying 
\[\norm{w}_{E_w(0,T^\ast)} + \norm{s}_{E_s(0,T^\ast)} \leq  C_{T^\ast, f, \alpha} + C\norm{w_0}_{X_w}\]
for some constant $C_{T^\ast, f, \alpha}$ and $C$. 
\end{thm}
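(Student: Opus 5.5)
We set this up as a Banach fixed point argument for the map $\Phi:(\tilde w,\tilde s)\mapsto(w,s)$ on a closed ball of $E_w(0,T)\times E_s(0,T)$. Given $(\tilde w,\tilde s)$ with $\tilde w|_{t=0}=w_0$ and $\tilde s(0)=0$, we first define $s$ by the ODE, using only first-order traces:
\[
s(t)=-\frac1\alpha\int_0^t\bigl(\p_y\tilde w(0,\tau)+f(\tilde s(\tau),\tau)\bigr)\,d\tau .
\]
We then let $w$ be the solution of the linear inhomogeneous Dirichlet problem
\[
\p_t w-\p_y^2w=\tilde s'\p_y\tilde w+\p_xf(\tilde s(t)+y,t),\qquad w(0,t)=\alpha,\qquad w(\cdot,0)=w_0 .
\]

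To handle the constant boundary datum, we subtract a fixed lift. For example, set $w=\alpha\eta(y)+z$, with $\eta\in C_c^\infty([0,\infty))$ and $\eta(0)=1$. Then $z$ solves the homogeneous Dirichlet problem with initial datum $w_0-\alpha\eta\in X_w$. This datum has vanishing trace at $y=0$, so the compatibility condition holds because $2/p_1+1/q<1$. The right-hand side picks up the extra term $\alpha\eta''\in F_w$. Maximal $L^{p_1}$-$L^q$ regularity of the Dirichlet Laplacian on $\R_+$ (see \cite{PS,DHP}) then gives
\[
\norm{w}_{E_w}\le C\bigl(\norm{w_0}_{X_w}+\alpha C_T+\norm{\tilde s'\p_y\tilde w}_{F_w}+\norm{\p_xf(\tilde s+\cdot,\cdot)}_{F_w}\bigr).
\]
The key point is that $C$ can be taken uniform for $T\le T_0$, by fixing $T_0$ and restricting.

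Next come the self-map estimates. Translation invariance gives $\norm{\p_xf(\tilde s(t)+\cdot,t)}_{L^q(\R_+)}\le\norm{\p_xf(t)}_{L^q(\R)}$, so this term is bounded by (F) independently of $\tilde s$. We also have $\norm{f(\tilde s,\cdot)}_{F_s}\le\norm f_{L^{p_2}L^\infty}$. Proposition \ref{prop:product}(i) bounds $\norm{\tilde s'\p_y\tilde w}_{F_w}$ by $CT^{1/p_1-1/p_2}(\norm{\tilde w}_{E_w}+\norm{w_0}_{X_w})\norm{\tilde s}_{E_s}$ and $\norm{\p_y\tilde w(0,\cdot)}_{F_s}$ by $CT^{1/p_2}(\cdots)$. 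Since $\norm{s}_{E_s}\lesssim\norm{s'}_{L^{p_2}}$ for $s(0)=0$ and $T\le T_0$, we choose the radius
\[
R=2\bigl(C\norm{w_0}_{X_w}+C_{T_0,f,\alpha}\bigr)
\]
and then take $T$ small. The positive powers $T^{1/p_1-1/p_2}$ (positive because $p_1<p_2$) and $T^{1/p_2}$ make the nonlinear contributions at most $R/2$, so $\Phi$ maps the ball into itself. This also yields the stated a priori bound.

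For the contraction we use Proposition \ref{prop:product}(ii). It applies because all iterates share the same initial value $w_0$, and it controls the differences of $\tilde s'\p_y\tilde w$ and of the traces. The remaining pieces are the differences of the forcing terms, and we expect these to be the main obstacle. For $\p_xf$ we write
\[
\p_xf(s_1+y,t)-\p_xf(s_2+y,t)=(s_1-s_2)\int_0^1\p_x^2f(s_2+\theta(s_1-s_2)+y,t)\,d\theta .
\]
Its $F_w$-norm is then bounded by $T^{1/p_1}\norm{\p_x^2f}_{L^\infty L^q}\norm{s_1-s_2}_{L^\infty}$, and $\norm{s_1-s_2}_{L^\infty}\le T^{1-1/p_2}\norm{s_1-s_2}_{E_s}$. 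This is exactly where the hypothesis on $\p_x^2f$ enters. For $f(s_1,t)-f(s_2,t)$ we use $\p_xf\in L^\infty$ to get the bound $\norm{\p_xf}_\infty T^{1/p_2}\norm{s_1-s_2}_{L^\infty}$, which is again small.

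Shrinking $T$ therefore gives a contraction, and the fixed point is a solution on $(0,T^\ast)$. Uniqueness in $E_w\times E_s$ follows by running the same difference estimates on $(0,T')$ for any two solutions, with $T'$ small and then iterating, or by enlarging $R$ to contain both solutions.
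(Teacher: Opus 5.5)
Your proposal is correct and follows essentially the same route as the paper: a Banach fixed point argument for the linearized map $(\varphi,\xi)\mapsto(w,s)$ on a closed ball in $E_w(0,T)\times E_s(0,T)$, with the self-map and contraction properties driven by Proposition~\ref{prop:product}, the small factors $T^{1/p_1-1/p_2}$ and $T^{1/p_2}$, and the hypotheses $\p_x^2 f\in L^\infty(0,T;L^q(\R))$ and $\p_x f\in L^\infty$ for the forcing differences. Your explicit lift $\alpha\eta$ for the Dirichlet datum and your continuation argument are useful details the paper leaves implicit; the latter gives uniqueness in the whole class $E_w\times E_s$, whereas the paper's proof literally yields uniqueness only in $B_R(T^\ast)$.
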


\begin{proof}
We apply the Banach fixed point theorem in the space $E_w(0,T)\times E_s(0,T)$. 
To this end, we introduce the closed subset 
\begin{equation*}
B_R(T) := \left\{(w,s)\in E_w(0,T)\times E_s(0,T)~\middle|~
  \begin{aligned} 
&w|_{y=0}=\alpha, \\
&w|_{t=0}=w_0,~s(0)=0,\\
&\norm{(w,s)}_{E_w(0,T)\times E_s(0,T)}\le R
  \end{aligned}
  \right\}
\end{equation*}
with $T>0$ and $R>0$ to be fixed later, and the solution mapping 
\begin{align*}
\begin{array}{rccc}
\mathcal{S}:&B_R(T)&\longrightarrow&E_w(0,T) \times E_s(0,T)\\
        & \rotatebox{90}{$\in$}&               & \rotatebox{90}{$\in$} \\
        &(\varphi,\xi)& \longmapsto   & (w,s)
\end{array}
\end{align*}
to
\begin{equation}\label{eq:w2}
	\begin{cases}
		\p_t w - \p_y^2 w= \xi'(t)\p_y \varphi+ \partial_x f(\xi(t)+y,t), & y>0,\ 0<t<T, \\
		\alpha s'(t)= -\p_y \varphi(0,t) - f(\xi(t),t),& 0<t<T, \\
		w(0,t)= \alpha,&0<t<T, \\
		w(y,0)= w_0(y),& y>0, \\
		s(0)= 0.&
\end{cases}
\end{equation}
This system is linear in $(w,s)$ for fixed $(\varphi, \xi)$. 
Proposition \ref{prop:product}, together with the maximal regularity for the heat equation and the ODE, implies that
\begin{align*}
&\norm{\mathcal{S}(\varphi,\xi)}_{E_w(0,T)\times E_s(0,T)}\\
&=\norm{w}_{E_w(0,T)}+\norm{s}_{E_s(0,T)} \\
&\le C\Bigl\{T^{\frac1{p_1}-\frac1{p_2}}\bigl(\norm{\varphi}_{E_w(0,T)}+ \norm{w_0}_{X_w}\bigr)\norm{\xi}_{E_s(0,T)} \\
&\qquad+ \norm{\p_x f}_{L^{p_1}(0,T;L^q(\R))}+\alpha+\norm{w_0}_{X_w} \\
&\qquad+ |\alpha|^{-1}\Bigl(T^{\frac{1}{p_2}}(\norm{\varphi}_{E_w(0,T)}+\norm{w_0}_{X_w})+ \norm{f}_{L^{p_2}(0,T;L^\infty(\R))}\Bigr)\Bigr\}.
\end{align*}
Choosing $T>0$ sufficiently small and $R>0$ sufficiently large (depending on the data), we obtain
\[\|\mathcal{S}(\varphi,\xi)\|_{E_w(0,T)\times E_s(0,T)} \le R\]
whenever $\norm{\varphi}_{E_w(0,T)}+\norm{\xi}_{E_s(0,T)}\le R$, where the constant $R$ depends only on $f$, $\alpha$, and $\norm{w_0}_{X_w}$.
This implies that $\mathcal{S}(B_R(T))\subset B_R(T)$. 

Similarly, we obtain the contraction estimate; for any $(\varphi_i, \xi_i)\in B_R(T)~(i=1,2)$, we have 
\begin{align*}
&\norm{\mathcal{S}(\varphi_1,\xi_1)-\mathcal{S}(\varphi_2,\xi_2)}_{E_w(0,T)\times E_s(0,T)} \\
&\le C T^{\frac1{p_1}-\frac1{p_2}}\Bigl\{(\norm{\varphi_1}_{E_w(0,T)}+\norm{w_0}_{X_w})\norm{\xi_1-\xi_2}_{E_s(0,T)}+ \norm{\varphi_1-\varphi_2}_{E_w(0,T)}\norm{\xi_2}_{E_s(0,T)}\Bigr\} \\
&\quad+ C\Bigl\{\norm{\p_x f(\xi_1(t)+y,t)-\p_x f(\xi_2(t)+y,t)}_{F_w(0,T)} \\
&\quad+|\alpha|^{-1}\Bigl(T^{\frac{1}{p_2}}\norm{\varphi_1-\varphi_2}_{E_w(0,T)}+ \norm{f(\xi_1(t),t)-f(\xi_2(t),t)}_{F_s(0,T)}\Bigr)\Bigr\}\\
&\le C T^{\frac1{p_1}-\frac1{p_2}}\Bigl\{(\norm{\varphi_1}_{E_w(0,T)}+\norm{w_0}_{X_w})\norm{\xi_1-\xi_2}_{E_s(0,T)}+ \norm{\varphi_1-\varphi_2}_{E_w(0,T)}\norm{\xi_2}_{E_s(0,T)}\Bigr\} \\
&\quad+ C\Bigl\{T^{\frac1{p_1}-\frac1{p_2}} \norm{\p_x^2 f}_{L^\infty(0,T;L^q(\R))}\norm{\xi_1-\xi_2}_{E_s(0,T)} \\
&\quad+|\alpha|^{-1}\Bigl(T^{\frac{1}{p_2}}\norm{\varphi_1-\varphi_2}_{E_w(0,T)}+ T\norm{\p_x f}_{L^\infty(0,T;L^\infty(\R))}\norm{\xi_1-\xi_2}_{E_s(0,T)}\Bigr)\Bigr\}.
\end{align*}
Here we have used the estimates 
\begin{align*}
&\norm{\p_x f(\xi_1(t)+y,t)-\p_x f(\xi_2(t)+y,t)}_{F_w(0,T)}
	\le \norm{\p_x^2 f}_{L^\infty(0,T; L^q(\R))}\norm{\xi_1-\xi_2}_{L^{p_1}(0,T)}, \\
&\norm{f(\xi_1(t),t)- f(\xi_2(t),t)}_{F_s(0,T)}
	\le \norm{\p_x f}_{L^\infty(0,T; L^\infty(\R))}\norm{\xi_1-\xi_2}_{F_s(0,T)}, \\
&\norm{\xi_1-\xi_2}_{F_s(0,T)}
	\le T\norm{\xi_1'-\xi_2'}_{L^{p_2}(0,T)}
	\le T\norm{\xi_1-\xi_2}_{E_s(0,T)},
\end{align*}
which follows from Poincar\'e-type inequality for functions vanishing at $t=0$, i.e., $\xi_1(0)=\xi_2(0)$. 
Hence, choosing $T>0$ sufficiently small, the mapping $\mathcal{S}$ is a strict contraction on $E_w(0,T)\times E_s(0,T)$; 
\[
\norm{\mathcal{S}(\varphi_1,\xi_1)-\mathcal{S}(\varphi_2,\xi_2)}_{E_w(0,T)\times E_s(0,T)}
\le \frac{1}{2}\norm{(\varphi_1-\varphi_2,\xi_1-\xi_2)}_{E_w(0,T)\times E_s(0,T)}.
\]
Therefore, the mapping $\mathcal{S}$ admits a unique fixed point in a closed subset $B_R(T^\ast)$ for some $T^\ast>0$. 
In particular, there exists a unique solution $(w,s)$ to \eqref{eq:w} on $(0,T^\ast)$.
\end{proof}

\begin{rem}\label{rem:reg}
\begin{enumerate}
\item At the level of local well-posedness established in Theorem \ref{thm:w}, no sign condition on $f$ or $\p_x f$ is required.
In particular, we do not characterize the sign of $w$ or $s$. 
The theorem also holds for $\alpha<0$. 
\item The possibility that $s(t)\equiv 0$ is not excluded by the above analysis.
This corresponds to a stationary interface and is consistent with the formulation. 
\item Although the equation
\[\alpha s'(t) =-\p_y w(0,t) - f(s(t),t)\]
is satisfied in the sense of $F_s$, we can in fact improve the regularity.
More precisely, we have
\[\p_y w|_{y=0} \in W^{\theta,p_1}(0,T^\ast)\subset C^{\theta-1/p_1}[0,T^\ast],\quad0<\theta-\frac{1}{p_1}<\frac{1}{2}\Bigl(1-\frac{2}{p_1}-\frac{1}{q}\Bigr),\]
where we recall that the condition $2/p_1+1/q<1$ was assumed.
Here $C^{\theta-1/p_1}[0,T^\ast]$ denotes the H\"older space. 
See, for example, \cite{Lun}. 

On the other hand,
\[s \in E_s(0,T^\ast)\subset C^{1-1/p_2}[0,T^\ast]\subset C^{\theta-1/p_1}[0,T^\ast].\]
Assuming appropriate H\"older continuity of $f$ in space and time, the composition $t\mapsto f(s(t),t)$ belongs to $C^{\theta-1/p_1}[0,T^*]$, since H\"older continuous functions are stable under composition. 
Consequently, the right-hand side of the above equation is H\"older continuous with exponent $\theta-1/p_1$ (and in particular belongs to a space less regular than $C^{1/2}[0,T^\ast]$).

Therefore, the solution $s\in E_s(0,T^\ast)$ enjoys additional regularity, namely,
\[s' \in C^{\theta-1/p_1}[0,T^\ast].\]
In this case, $s'$ admits a H\"older continuous extension up to $t=0$, and the extended function satisfies 
\[s'(0)=\frac{-\p_y w_0|_{y=0} - f(0,0)}{\alpha}.\] 
By continuity near $t=0$, it follows that there exists $\tilde{T}\in(0,T^\ast]$ such that
\begin{align*}
\begin{cases}
s'(t)>0 \quad {\rm for~} t\in[0,\tilde{T}]&\quad{\rm if~}\quad \p_y w_0|_{y=0}+f(0,0)<0,\\
s'(t)<0 \quad {\rm for~} t\in[0,\tilde{T}]&\quad{\rm if~}\quad \p_y w_0|_{y=0}+f(0,0)>0.
\end{cases}
\end{align*}
In particular, the case $s(t)\not\equiv 0$ is admissible, i.e. the free boundary moves for small time.
Moreover, the improved regularity
\[
s'\in C^{\beta}[0,T^\ast]{\rm~with~}0<\beta<\frac{1}{2}\Bigl(1-\frac{2}{p_1}-\frac{1}{q}\Bigr)
\]
will be used in subsequent arguments.
This additional H\"older regularity will be crucial in Sections \ref{sec:reg} and \ref{sec:v}. 
When the regularity of $f$ is worse, the regularity of $s'$ becomes the same as $f$, which is less regular than $C^\beta$. 
This will be clarified in Theorem \ref{thm:identification}. 
\end{enumerate}
\end{rem}

\section{H\"older Regularity for the $v$-Problem}\label{sec:reg}
This section is devoted to investigating further regularity properties of the solution obtained in Section \ref{sec:w}.
We note that we do not directly solve the original problem \eqref{eq:v} whose unknown functions are $v$ and $s$.
Instead, we regard the problem \eqref{eq:v} as a linear system for $v$ with a given free boundary $s$, where $s$ is taken from the solution of the derivative system \eqref{eq:w}.
The role of Schauder theory in this paper is purely linear and is used only to recover classical regularity. 
As a first step, we introduce H\"older spaces and recall some basic properties.

\begin{definition}[Parabolic H\"older spaces]\label{def:holder}
Let $0<\gamma<1/2$ and $T>0$. 
We denote by $C^{2\gamma,\gamma}(\R\times[0,T])$ the space of all bounded functions $f:\R\times[0,T]\to\R$ such that
\[
\norm{f}_{C^{2\gamma,\gamma}(\R\times[0,T])}:= \sup_{(x,t)\in\R\times[0,T]} |f(x,t)|
+ [f]_{C^{2\gamma,\gamma}(\R\times[0,T])} < \infty,
\]
where the parabolic H\"older semi-norm is defined by
\[
[f]_{C^{2\gamma,\gamma}(\R\times[0,T])}:= \sup_{\substack{(x_1,t_1),(x_2,t_2)\in\R\times[0,T]\\(x_1,t_1)\neq(x_2,t_2)}} \frac{|f(x_1,t_1)-f(x_2,t_2)|}{|x_1-x_2|^{2\gamma}+|t_1-t_2|^{\gamma}}.
\]
\end{definition}
For simplicity of notation, we define the spaces on $\R$; their restriction to $\R_+$ will be also used in this paper.
We also use the Lipschitz function space $C^{0,1}[0,T]$ as a time variable function space. 
Details can be found in \cite{Lun}. 

The following lemma plays a crucial role in transferring H\"older regularity from the source term to the moving-coordinate formulation. 
\begin{lem}[Stability under composition]\label{lem:composition}
Let $0<\gamma<1/2$.
Assume that
\[
f \in C^{2\gamma,\gamma}(\R\times[0,T]),
\qquad
s \in C^{0,1}[0,T].
\]
Then the function
\[F(y,t) := f(s(t)+y,t)\]
belongs to $C^{2\gamma,\gamma}(\R\times[0,T])$.
Moreover, there exists a constant $C>0$, depending only on $\norm{s'}_{L^{\infty}(0,T)}$, such that
\[
\norm{F}_{C^{2\gamma,\gamma}(\R\times[0,T])} \le C \norm{f}_{C^{2\gamma,\gamma}(\R\times[0,T])}.
\]
\end{lem}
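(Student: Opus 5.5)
The plan is to estimate the sup-norm and the parabolic Hölder seminorm of $F$ separately, and to handle the seminorm by a triangle inequality through an intermediate point. The sup-norm is immediate:
\[
\sup_{(y,t)}|F(y,t)|\le \sup_{(x,t)}|f(x,t)|,
\]
since $(y,t)\mapsto(s(t)+y,t)$ maps $\R\times[0,T]$ into itself. For the seminorm, fix $(y_1,t_1)\neq(y_2,t_2)$ and pass through the intermediate point $(s(t_2)+y_1,t_2)$:
\begin{align*}
|F(y_1,t_1)-F(y_2,t_2)|
&\le |f(s(t_1)+y_1,t_1)-f(s(t_2)+y_1,t_2)| \\
&\quad+|f(s(t_2)+y_1,t_2)-f(s(t_2)+y_2,t_2)| .
\end{align*}
The second term is bounded by $[f]\,|y_1-y_2|^{2\gamma}$.

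The first term is bounded by $[f]\bigl(|s(t_1)-s(t_2)|^{2\gamma}+|t_1-t_2|^\gamma\bigr)$. Since $s$ is Lipschitz, $|s(t_1)-s(t_2)|^{2\gamma}\le \norm{s'}_{L^\infty}^{2\gamma}|t_1-t_2|^{2\gamma}$.

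The only real step is converting $|t_1-t_2|^{2\gamma}$ into the parabolic scaling $|t_1-t_2|^{\gamma}$. I would split into two cases.
\begin{itemize}
\item If $|t_1-t_2|\le 1$, then $|t_1-t_2|^{2\gamma}\le |t_1-t_2|^{\gamma}$ directly.
\item If $|t_1-t_2|>1$, I would not use the seminorm of $f$ at all. The crude bound $|F(y_1,t_1)-F(y_2,t_2)|\le 2\sup|f|\le 2\sup|f|\,|t_1-t_2|^\gamma$ suffices instead.
\end{itemize}
Since $t_1,t_2\in[0,T]$, one could alternatively use $|t_1-t_2|^{2\gamma}\le T^{\gamma}|t_1-t_2|^{\gamma}$, but that would make the constant depend on $T$. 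The case split avoids this.

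Collecting the bounds gives
\[
[F]\le \bigl(1+\norm{s'}_{L^\infty(0,T)}^{2\gamma}\bigr)[f]+2\sup|f|,
\]
so $\norm{F}_{C^{2\gamma,\gamma}}\le C\norm{f}_{C^{2\gamma,\gamma}}$ with $C=3+\norm{s'}_{L^\infty(0,T)}^{2\gamma}$, which depends only on $\norm{s'}_{L^\infty(0,T)}$ as claimed. Measurability and boundedness of $F$ are clear, since $F$ is the composition of $f$ with a Lipschitz map.

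The main obstacle is the mismatch between the time-Hölder exponent $2\gamma$ produced by the Lipschitz shift and the required exponent $\gamma$. The case split above resolves it.
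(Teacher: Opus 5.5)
Your proof is correct and follows essentially the same route as the paper: the seminorm of $f$, the Lipschitz bound on $s$, and the same case split $|t_1-t_2|\le 1$ versus $|t_1-t_2|>1$ to turn $|t_1-t_2|^{2\gamma}$ into $|t_1-t_2|^{\gamma}$ without introducing $T$-dependence. The only differences are cosmetic. You separate the spatial and temporal increments through an intermediate point, whereas the paper applies the seminorm at the two points directly and then uses subadditivity of $x\mapsto x^{2\gamma}$. You also make the constant $C=3+\norm{s'}_{L^\infty(0,T)}^{2\gamma}$ explicit.
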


\begin{proof}
Let $(y_1,t_1),(y_2,t_2)\in\R\times[0,T]$. Then
\begin{align*}
|F(y_1,t_1)-F(y_2,t_2)|
&= |f(s(t_1)+y_1,t_1)-f(s(t_2)+y_2,t_2)| \\
&\le [f]_{C^{2\gamma,\gamma}(\R\times[0,T])}\Big(|s(t_1)+y_1-(s(t_2)+y_2)|^{2\gamma}+ |t_1-t_2|^{\gamma}\Big).
\end{align*}
Using the Lipschitz continuity of $s$, we estimate
\[|s(t_1)+y_1-(s(t_2)+y_2)|\le |y_1-y_2| + C|t_1-t_2|,\]
which yields 
\[|s(t_1)+y_1-(s(t_2)+y_2)|^{2\gamma}\le C\big(|y_1-y_2|^{2\gamma}+|t_1-t_2|^{2\gamma}\big)\]
by concavity of $x\mapsto x^{2\gamma}$. 
Since $0<\gamma<1/2$, we have $|t_1-t_2|^{2\gamma}\le |t_1-t_2|^{\gamma}$ for $|t_1-t_2|\le 1$, and the case $|t_1-t_2|>1$ is absorbed into the boundedness of $f$. 
Consequently,
\[
|F(y_1,t_1)-F(y_2,t_2)| \le C \norm{f}_{C^{2\gamma,\gamma}(\R\times[0,T])}\big(|y_1-y_2|^{2\gamma}+|t_1-t_2|^{\gamma}\big).
\]
Hence, $F\in C^{2\gamma,\gamma}(\R\times[0,T])$ and completes the proof.
\end{proof}

\begin{rem}
The free boundary $s$ obtained in Section \ref{sec:w} satisfies that the derivative $s'$ is H\"older continuous.
Therefore $s$ satisfies the Lipschitz assumption in Lemma \ref{lem:composition}. 
\end{rem}

\begin{thm}[Schauder theory for the $v$-problem]\label{thm:holder-v}
Let $0<\gamma, \beta<1/2$ and $T>0$.
Assume that
\[f \in C^{2\gamma,\gamma}(\R\times[0,T]), \qquad s \in C^{1+\beta}[0,T],\]
and that the initial data $v_0$ satisfies $v_0 \in C^{2+2\gamma}(\R_+)$ and $\p_y v_0|_{y=0} = \alpha$. 
Then the linear problem
\begin{equation}\label{eq:v-holder}
	\begin{cases}
		\p_t v - \partial_y^2 v = s'(t)\partial_y v + f(s(t)+y,t),& y>0,\; 0<t<T,\\
		\p_y v(0,t) = \alpha, & 0<t<T,\\
		v(y,0) = v_0(y), & y>0
	\end{cases}
\end{equation}
admits a unique classical solution
\[v \in C^{2+2\tilde{\gamma},\,1+\tilde{\gamma}}(\R_+\times[0,T]), \]
where $\tilde{\gamma}=\min\{\gamma, \beta\}$. 
\end{thm}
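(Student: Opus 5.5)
We treat \eqref{eq:v-holder} as a linear Neumann problem for the heat equation on the half-line, with a drift term $s'(t)\p_y v$ whose coefficient is H\"older continuous in time only, and we apply the classical Schauder theory for parabolic initial-boundary value problems; see \cite{Lun} or the Ladyzhenskaya--Solonnikov--Ural'tseva theory. The plan is to check the hypotheses of that theory with the exponent $\tilde\gamma=\min\{\gamma,\beta\}$, and then invoke existence and uniqueness together with the a priori estimate.

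First, the data. By Lemma \ref{lem:composition}, applied with $s\in C^{1+\beta}[0,T]\subset C^{0,1}[0,T]$, the source $F(y,t)=f(s(t)+y,t)$ lies in $C^{2\gamma,\gamma}(\R_+\times[0,T])\subset C^{2\tilde\gamma,\tilde\gamma}$. The drift coefficient $b(t)=s'(t)$ lies in $C^{\beta}[0,T]$. Regarded as a function of $(y,t)$, it is therefore in $C^{2\tilde\gamma,\tilde\gamma}$ (constant in $y$, and $C^\beta\subset C^{\tilde\gamma}$ on a bounded interval). The boundary datum $\alpha$ is constant, hence in $C^{(1+2\tilde\gamma)/2}[0,T]$. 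The initial datum satisfies $v_0\in C^{2+2\gamma}(\R_+)\subset C^{2+2\tilde\gamma}(\R_+)$. The first-order compatibility condition for a Neumann problem at this regularity level is exactly $\p_y v_0(0)=\alpha$, and this is assumed. No second-order compatibility is needed, since $1+2\tilde\gamma<2$.

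Second, we solve the problem. The route is to first subtract a lifting: set $v=\alpha y\,\chi(y)+\tilde v$ with a cutoff $\chi$ equal to $1$ near $0$. This gives homogeneous Neumann data and alters the source only by a smooth bounded term $s'(t)\p_y(\alpha y\chi)+\alpha(y\chi)''$, which is in $C^{2\tilde\gamma,\tilde\gamma}$. Next, the drift term is handled either by citing the general Schauder theorem for operators $\p_t-\p_y^2-b(t)\p_y$ with H\"older coefficients directly, or by a perturbation argument. In the perturbation argument, the map $\tilde v\mapsto b\,\p_y\tilde v$ is bounded from $C^{2+2\tilde\gamma,1+\tilde\gamma}$ to $C^{2\tilde\gamma,\tilde\gamma}$ with a small norm on short time intervals, because $\p_y\tilde v$ is in $C^{1+2\tilde\gamma,(1+2\tilde\gamma)/2}$ and vanishes at $t=0$ after subtracting the initial part. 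A contraction argument on short intervals then gives the solution, and iterating finitely many times covers $[0,T]$, since the problem is linear and the constants are uniform. For the constant-coefficient Neumann heat problem itself, the Schauder estimate follows by even reflection to $\R$ and the heat kernel. Uniqueness holds in the class of bounded classical solutions: the difference of two solutions solves the homogeneous problem with zero data, and the maximum principle for bounded functions on the unbounded domain (a Phragm\'en--Lindel\"of type argument with the barrier $\varepsilon(y^2+Mt)$, noting that $b$ is bounded) gives zero. Alternatively, uniqueness follows directly from the a priori Schauder estimate.

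The main obstacle is the unbounded domain combined with non-decaying data. Here $v_0$ grows linearly in the setting of the paper, but the theorem as stated assumes $v_0\in C^{2+2\gamma}(\R_+)$, which presupposes boundedness; so bounded H\"older spaces on the half-line suffice. Within those spaces, the delicate point is to verify that the Schauder theory on $\R_+$ (as in \cite{Lun}) indeed covers a time-dependent first-order coefficient that is only $C^{\beta}$ in $t$. I would handle this by the perturbative argument above rather than by relying on a reference for variable coefficients.
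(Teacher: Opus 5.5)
Your proposal is correct and takes the same route as the paper, which simply invokes the standard Schauder theory for the non-homogeneous Neumann problem on the half-line (citing \cite{Lieb}). There, Lemma \ref{lem:composition} supplies the H\"older regularity of $f(s(t)+y,t)$ and the $C^{\beta}$ regularity of the drift coefficient $s'$ fixes the exponent $\tilde\gamma$; your lifting, even-reflection and short-time perturbation sketch spells out what that citation contains. One small slip is in your optional maximum-principle argument: $\varepsilon(y^2+Mt)$ is not a supersolution, since $b(t)\,\p_y(y^2)=2b(t)y$ is unbounded in $y$, so use instead e.g.\ $\varepsilon e^{Kt}\bigl(1+(y-1)^2\bigr)$ with $K>2+\|b\|_{\infty}$ (its $y$-derivative at $y=0$ is negative, which also excludes a maximum on $y=0$ without Hopf's lemma), or simply rely on your alternative argument, uniqueness via the contraction in the H\"older class.
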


Theorem \ref{thm:holder-v} follows from the standard Schauder theory for parabolic equations with non-homogeneous Neumann boundary conditions on the half-line (see \cite{Lieb}).
Lemma \ref{lem:composition} guarantees that the forcing term $f(s(t)+y,t)$ possesses the required H\"older regularity.
The regularity $\tilde{\gamma}$ of the solution $v$ is determined by the H\"older regularity of the coefficient of the first-order term $s'(t)\p_y v$.

\begin{cor}[higher regularity]\label{cor:hoelder2}
In addition to the assumption of Theorem \ref{thm:holder-v}, we assume 
\[\p_y v_0\in C^{2+2\tilde{\gamma}}(\R_+), \quad \p_x f\in C^{2\tilde{\gamma}, \tilde{\gamma}}(\R\times [0,T]).\]
Then the solution $v$ in Theorem \ref{thm:holder-v} satisfies $\partial_y v\in C^{2+2\tilde{\gamma}, 1+\tilde{\gamma}}(\R_+\times [0,T])$ and 
\begin{equation}\label{eq:v2}
\begin{cases}
	\p_t (\p_y v) - \partial_y^2 (\p_y v) = s'(t)\partial_y (\p_y v) + \p_x f(s(t)+y,t),& y>0,\; 0<t<T,\\
	\p_y v(0,t) = \alpha, & 0<t<T,\\
	\p_y v(y,0) = \p_y v_0(y), & y>0. 
\end{cases}
\end{equation}
\end{cor}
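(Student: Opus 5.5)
Rather than differentiating $v$ directly, I will solve the differentiated problem separately and then identify its solution with $\p_y v$ by uniqueness. The boundary condition for $\p_y v$ is Dirichlet ($\p_y v(0,t)=\alpha$), so the relevant problem is
\begin{equation*}
\p_t z-\p_y^2 z=s'(t)\p_y z+\p_x f(s(t)+y,t),\quad z(0,t)=\alpha,\quad z(y,0)=\p_y v_0(y).
\end{equation*}
By Lemma \ref{lem:composition} applied to $\p_x f\in C^{2\tilde\gamma,\tilde\gamma}$, the forcing $\p_x f(s(t)+y,t)$ lies in $C^{2\tilde\gamma,\tilde\gamma}(\R_+\times[0,T])$. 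The coefficient $s'\in C^{\beta}[0,T]\subset C^{\tilde\gamma}$ is H\"older in $t$ and constant in $y$, and $\p_y v_0\in C^{2+2\tilde\gamma}(\R_+)$. The first-order compatibility condition is $\p_y v_0(0)=\alpha$, which holds by assumption. The second-order condition requires $\p_y^3 v_0(0)+s'(0)\p_y^2v_0(0)+\p_x f(0,0)=0$ (since the boundary datum is constant, $\p_t z(0,0)=0$). I will verify it below. Granting it, Dirichlet Schauder theory on the half-line (\cite{Lieb}) gives a unique $z\in C^{2+2\tilde\gamma,1+\tilde\gamma}(\R_+\times[0,T])$.

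To identify $z$ with $\p_y v$, I will set $\tilde v(y,t):=v(0,t)-\int_y^\infty\cdots$. This is awkward on the half-line, so instead I define
\[\tilde v(y,t):=v(0,t)+\int_0^y z(\eta,t)\,d\eta.\]
Then $\p_y\tilde v=z$ and $\p_y^2\tilde v=\p_y z$. Integrating the $z$-equation over $(0,y)$ gives
\[\p_t\tilde v-\p_y^2\tilde v-s'\p_y\tilde v-f(s+y,t)=\p_t v(0,t)-\p_y z(0,t)-s'(t)\alpha-f(s(t),t).\]
The right-hand side is a function $g(t)$ only. Evaluating the $v$-equation at $y=0$, where the classical regularity of $v$ allows the second derivative trace, shows $\p_t v(0,t)=\p_y^2v(0,t)+s'\alpha+f(s(t),t)$. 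Hence
\[g(t)=\p_y^2 v(0,t)-\p_y z(0,t).\]
So $d:=v-\tilde v$ satisfies $d(0,t)=0$ and $\p_t d-\p_y^2 d-s'\p_y d=-g(t)$, but $g$ is not a priori zero.

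A cleaner route is to let $e:=\p_y v-z$ in a weak sense. Since $v\in C^{2+2\tilde\gamma,1+\tilde\gamma}$ only, $\p_y v$ is not twice differentiable, so I will mollify in $y$ or argue by difference quotients. For $h>0$, the function $D_h v(y,t)=(v(y+h,t)-v(y,t))/h$ solves the same linear equation with forcing $D_h[f(s+\cdot,t)]$, which converges to $\p_x f(s+y,t)$ in $C^{2\tilde\gamma,\tilde\gamma}$-weak sense. Because $v$ is regular up to the boundary, $D_hv(0,t)\to\alpha$. Interior and boundary Schauder estimates, uniform in $h$ via the bounds on $\p_x f$ and $\p_y v_0$, then give a uniform $C^{2+2\tilde\gamma,1+\tilde\gamma}$ bound on $D_h v$ on $[\delta,\infty)$ and near the boundary. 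Passing to the limit shows that $\p_y v$ has the claimed regularity and solves \eqref{eq:v2}. By uniqueness for the Dirichlet problem, $\p_y v=z$.

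I expect the main obstacle to be the behaviour at $y=0$: the difference quotient does not preserve the boundary condition exactly, since $D_hv(0,t)\ne\alpha$. I would first try to handle this by the maximum principle applied to $D_hv-z$. This function satisfies a linear equation with forcing tending to $0$ uniformly, boundary values $D_hv(0,t)-\alpha=O(h^{1+2\tilde\gamma})\to0$ (Taylor expansion using $\p_y v(0,t)=\alpha$), and initial values tending to $0$. The comparison principle on the unbounded half-line with bounded solutions is justified by a Phragm\'en--Lindel\"of-type argument, using the barrier $\varepsilon(1+y^2+Ct)$. This gives $D_hv\to z$ uniformly, hence $\p_y v=z$, and the claimed regularity is inherited from $z$. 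The second-order compatibility condition follows by differentiating the $v$-compatibility, which is implicit in $v$ being $C^{2+2\tilde\gamma,1+\tilde\gamma}$ up to $t=0$. Concretely, evaluating $\p_y$ of the $v$-equation at $(0,0)$ in the limit, together with $\p_t\p_y v(0,t)=0$, is exactly the $z$-compatibility. If needed, I would state it as part of the hypothesis, in keeping with the assumptions of Theorem \ref{thm:holder-v}.
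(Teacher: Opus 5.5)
\textbf{The gap: the corner compatibility condition.} The step that fails is your verification of
\[
\p_y^3 v_0(0)+s'(0)\,\p_y^2 v_0(0)+\p_x f(s(0),0)=0 .
\]
Your argument for it is circular. The fact that $v\in C^{2+2\tilde\gamma,1+\tilde\gamma}$ solves the Neumann problem encodes only the zeroth-order condition $\p_y v_0(0)=\alpha$. ``Evaluating $\p_y$ of the $v$-equation at $(0,0)$'' and using $\p_t\p_y v(0,t)=0$ up to $t=0$ presupposes exactly the continuity of $\p_y^3 v$ and $\p_t\p_y v$ at the corner that you are trying to prove.

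In fact the condition is not implied by the hypotheses, and it is necessary for the conclusion. Take $f\equiv0$, $s\equiv0$ and $v_0=(\alpha y+y^3)\chi(y)$, with $\chi$ a smooth cutoff equal to $1$ near $0$. Every assumption of the corollary holds. Now suppose $\p_y v\in C^{2+2\tilde\gamma,1+\tilde\gamma}(\overline{\R_+}\times[0,T])$ satisfied \eqref{eq:v2}. Then $\p_t\p_y v$ would be continuous at $(0,0)$, so $\p_y v(0,t)=\alpha$ would force $\p_t\p_y v(0,0)=0$. But the equation at $t=0$ gives $\p_t\p_y v(0,0)=\p_y^3v_0(0)=6$, a contradiction.

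The condition is not automatic even in the coupled setting of Theorem \ref{thm:identification}. There $s'(0)=-(\p_y^2v_0(0)+f(0,0))/\alpha$, and the same $v_0$ gives $s'(0)=0$, so the condition again reads $6=0$. The paper states the corollary without proof, implicitly by differentiating and invoking Schauder theory, and omits this hypothesis. Your fallback of adding it as a hypothesis is therefore required, not optional. Otherwise the conclusion must be weakened, for example to regularity on $\overline{\R_+}\times[\delta,T]$ with $\delta>0$.

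\textbf{The rest of your argument.} Once the condition is added, your plan is a sound way to make the paper's implicit ``differentiate the equation'' rigorous: solve the Dirichlet problem for $z$ by Schauder theory, then identify $z=\p_y v$ via difference quotients and comparison. Three repairs are needed.
\begin{enumerate}[label=(\alph*)]
\item The boundary error is $D_hv(0,t)-\alpha=\tfrac h2\p_y^2v(0,t)+O(h^{1+2\tilde\gamma})$, which is only $O(h)$, not $O(h^{1+2\tilde\gamma})$. This still suffices.
\item The barrier $\varepsilon(1+y^2+Ct)$ is not a supersolution because of the drift: $(\p_t-\p_y^2-s'\p_y)(1+y^2+Ct)=C-2-2s'(t)y$ is negative for large $y$ when $s'>0$. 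Use instead $\varepsilon e^{Kt}(1+y^2)$ with $K>\norm{s'}_{L^\infty(0,T)}+2$, plus $\kappa_h(1+t)$ to absorb the $O(\kappa_h)$ errors in forcing, boundary and initial data.
\item Drop the paragraph claiming $h$-uniform boundary Schauder bounds for $D_hv$. Since $\p_t D_hv(0,t)=(\p_tv(h,t)-\p_tv(0,t))/h$ is only $O(h^{2\tilde\gamma-1})$, the Dirichlet trace is not uniformly controlled in $C^{1+\tilde\gamma}$ in $t$.
\end{enumerate}
The comparison argument alone gives $\norm{D_hv-z}_{L^\infty}\to0$, hence $\p_y v=z$, and all the regularity is inherited from $z$. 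Note that the comparison needs only $z$ bounded and continuous up to $y=0$ and $t=0$. So even without the compatibility condition your argument still yields $\p_y v=z$ and \eqref{eq:v2} classically for $t>0$.
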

This additional regularity will be crucial in Section \ref{sec:v} to identify the solution obtained by the maximal $L^p$-regularity approach with the classical solution constructed in the present section.
In particular, we will prove that $w=\p_y v$ in the next section. 

\section{Solvability for the $v$-problem and the original problem}\label{sec:v}

\subsection{Identification of $w=\p_y v$}
The key ingredient for this identification is the uniqueness of solutions to the derivative problem in the maximal regularity class.

\begin{thm}\label{thm:identification}
Under the assumptions of Theorem \ref{thm:w}, $0<\gamma<1/2$, $0<\beta<(1/2)(1-2/p_1-1/q)$, $f\in C^{2\gamma, \gamma}(\R\times [0,T])$, the solution $s$ in Theorem \ref{thm:w} satisfies $s\in C^{1+\tilde{\gamma}}[0,T^\ast]$. 

Furthermore, let $w_0\in C^{2+2\tilde{\gamma}}(\R_+), \p_x f\in C^{2\tilde{\gamma}, \tilde{\gamma}}(\R\times [0,T^\ast])$ and define $v_0:=\int_0^y w_0(\tilde{y})d\tilde{y}$. 
Then we are able to take the solution $v$ in Theorem \ref{thm:holder-v} and Corollary \ref{cor:hoelder2}, and we have 
\[w=\p_y v\]
and $\p_y v\in C^{2+2\tilde{\gamma}, 1+\tilde{\gamma}}(\R_+\times [0,T^\ast])$. 
\end{thm}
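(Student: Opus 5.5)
The plan has three steps: first upgrade the regularity of $s$, then build the classical solution $v$, and finally identify $\p_y v$ with $w$ by uniqueness.

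\textbf{Regularity of $s$.} We follow Remark \ref{rem:reg}(3). Since $w\in E_w(0,T^\ast)$ and $2/p_1+1/q<1$, the mixed-derivative embedding gives
\[
E_w\hookrightarrow W^{\theta,p_1}(0,T^\ast;W^{2(1-\theta),q}(\R_+)).
\]
Choose $\theta$ so that $2(1-\theta)-1/q>1$. The trace $\p_y w|_{y=0}$ then lies in $W^{\theta,p_1}(0,T^\ast)\subset C^{\beta}[0,T^\ast]$ for every $\beta<\frac12(1-2/p_1-1/q)$. Next, $s\in W^{1,p_2}$ is $C^{1-1/p_2}$ and in particular Lipschitz-like enough on $[0,T^\ast]$. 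Combined with $f\in C^{2\gamma,\gamma}$, this makes $t\mapsto f(s(t),t)$ Hölder continuous with exponent at least $\min\{\gamma,\,2\gamma(1-1/p_2)\}$. One bootstrap pass is needed here: first $s'\in C^\beta$, so $s$ is Lipschitz, and then Lemma \ref{lem:composition} (at $y=0$) gives exponent $\gamma$. The ODE $\alpha s'=-\p_y w(0,\cdot)-f(s(\cdot),\cdot)$ therefore has a right-hand side in $C^{\tilde\gamma}$, and $s\in C^{1+\tilde\gamma}[0,T^\ast]$.

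\textbf{Construction of $v$.} With $s\in C^{1+\tilde\gamma}$, set $v_0=\int_0^y w_0$. Then $v_0\in C^{3+2\tilde\gamma}$ and $\p_y v_0(0)=w_0(0)=\alpha$. Theorem \ref{thm:holder-v} produces $v$, and Corollary \ref{cor:hoelder2} shows that $z:=\p_y v\in C^{2+2\tilde\gamma,1+\tilde\gamma}(\R_+\times[0,T^\ast])$ solves \eqref{eq:v2}. This is the linear problem \eqref{eq:w2} with the fixed coefficient $\xi=s$ and Dirichlet data $\alpha$.

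\textbf{Identification $w=z$.} The function $w$ solves the same linear Dirichlet problem: the equation $\p_t w-\p_y^2 w=s'\p_y w+\p_x f(s+y,t)$, with $w(0,t)=\alpha$ and $w(\cdot,0)=w_0$. Set $d:=w-z$. Then $d$ solves the homogeneous problem
\[
\p_t d-\p_y^2 d=s'(t)\p_y d,\qquad d(0,t)=0,\qquad d(\cdot,0)=0.
\]
The obstacle is that $w$ lies in the $L^p$ class while $z$ is only bounded and Hölder, and neither $z$ nor $d$ need decay in $L^q$. We therefore prove uniqueness in the class of functions that are bounded and regular enough for classical or weak maximum principles. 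The key point is that $w\in E_w\subset BUC([0,T^\ast];BUC^1)$ is bounded, so $d$ is a bounded, continuous strong solution with bounded coefficient $s'$.

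To handle this we use a Phragmén–Lindelöf type maximum principle for bounded solutions on the half-line. Alternatively, we remove the drift by returning to $x=y+s(t)$, which turns the problem into the heat equation on $\{x>s(t)\}$, and compare with $\varepsilon(x^2+2t+1)$. Letting $\varepsilon\to0$ gives $d\equiv0$.

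The regularity of $w$ at interior points needed for the maximum principle comes from interior parabolic regularity, since $w$ is a strong solution with Hölder data. If this is awkward, we can instead show that $d$ is a weak solution in $L^\infty$ and apply duality with the adjoint problem. We expect this uniqueness step, in a class containing both $w$ and the non-decaying $z$, to be the main difficulty.
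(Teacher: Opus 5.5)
Your proposal is correct. Its first two steps follow the paper: the upgrade $s\in C^{1+\tilde\gamma}[0,T^\ast]$ is the argument of Remark~\ref{rem:reg}(3), and $z=\p_y v$ is taken from Theorem~\ref{thm:holder-v} and Corollary~\ref{cor:hoelder2}. You genuinely differ in the identification step.

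The paper ends with a one-line appeal to uniqueness of \eqref{eq:z} ``in the class'' $E_w$ and $C^{2+2\tilde\gamma,1+\tilde\gamma}$. That is shorter, but as written it tacitly needs one solution to lie in the other's uniqueness class. Either $z\in E_w$, which needs $L^q$-decay of $z$ that is not known a priori. Or $w\in C^{2+2\tilde\gamma,1+\tilde\gamma}$ up to $y=0$ and $t=0$, which needs boundary Schauder regularity for the strong solution. (At the corner this would also force the compatibility condition $w_0''(0)+s'(0)w_0'(0)+\p_x f(0,0)=0$, which is not among the hypotheses.)

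You instead prove uniqueness in a class that clearly contains both functions: bounded, continuous up to the parabolic boundary, and classical inside. Boundedness and continuity of $w$ come for free from $E_w\hookrightarrow BUC([0,T^\ast];X_w)\subset BUC([0,T^\ast];BUC^1(\overline{\R_+}))$. Undoing the drift with $x=y+s(t)$ turns the difference into a solution of the heat equation on $\{x>s(t)\}$, and the barrier $\varepsilon(x^2+2t+1)$ then finishes the argument. The interior regularity is even easier than you suggest: the transformed difference is a distributional solution of the heat equation, hence smooth, so no Schauder theory for $w$ is needed. Your route costs a Phragm\'en--Lindel\"of argument, but it supplies exactly the step the paper leaves implicit. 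It also uses only boundedness and continuity of $z$, not its H\"older regularity up to $t=0$.

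Two small points. The bootstrap for $t\mapsto f(s(t),t)$ is unnecessary, since $p_2>p_1>2$ already gives $2\gamma(1-1/p_2)>\gamma$. And $v_0=\int_0^y w_0$ need not be bounded when $w_0\in X_w$, so $v_0\in C^{2+2\gamma}(\R_+)$ is an extra assumption rather than a consequence. The paper presupposes this here too, and states it explicitly in Theorem~\ref{thm:main}.
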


\begin{proof}
The first part $s\in C^{1+\tilde{\gamma}}[0,T^\ast]$ is in Remark \ref{rem:reg} (3). 
Let $v$ be the solution obtained in Theorem~\ref{thm:holder-v}.
Define $z := \p_y v$.
Then $z$ satisfies, by Corollary \ref{cor:hoelder2}, 
\begin{equation}\label{eq:z}
	\begin{cases}
		\p_t z - \p_y^2 z= s'(t)\p_y z + \p_x f(s(t)+y,t),& y>0,\; 0<t<T^\ast,\\
		z(0,t)=\alpha, & 0<t<T^\ast,\\
		z(y,0)=\p_y v_0(y)=w_0(y), & y>0.
	\end{cases}
\end{equation}

On the other hand, by Theorem~\ref{thm:w}, the function $w$ satisfies the same initial-boundary value problem \eqref{eq:z} in the sense of $L^p$-maximal regularity.
Since
\[\p_x f(s(t)+y,t)\in L^{p_1}(0,T^\ast;L^q(\R_+))\cap C^{2\gamma,\gamma}(\R_+\times[0,T^\ast]),\]
the problem \eqref{eq:z} admits a unique solution in the class
\[W^{1,p_1}(0,T^\ast;L^q(\R_+))\cap L^{p_1}(0,T^\ast;W^{2,q}(\R_+))\]
and $C^{2+2\tilde{\gamma}, 1+\tilde{\gamma}}(\R_+\times [0,T^\ast])$. 
Therefore, by uniqueness, we conclude 
\[w = z = \p_y v \quad \text{in } \R_+\times[0,T^\ast].\]
\end{proof}

\subsection{Existence and uniqueness theorem for the problems}
In this subsection, we conclude the existence and uniqueness of solutions to the equations \eqref{eq:v} and \eqref{eq:u}. 
Since $\p_t v(0,t)=0$ in the classical sense from the equation \eqref{eq:v-holder} and the second equation in \eqref{eq:w} with $w=\p_y v$, the Dirichlet boundary condition $v(0,t)=0$ for all $t\in[0,T^\ast]$ follows from $v_0(0)=\int_0^0 w_0(\tilde{y})d\tilde{y}=0$.

We can now state the main theorem. 
For the equations \eqref{eq:v} and \eqref{eq:u}, we establish the following.  
\begin{thm}[Solvability for the problems on the half-line]\label{thm:main}
Assume $\alpha\neq0$, the condition {\rm (A)}, and $0<\gamma<1/2$, $0<\beta<(1/2)(1-2/p_1-1/q)$. 
\begin{enumerate}[label=\upshape(\roman*)]
\item Forcing term. \\
Let $f:\R\times[0,T]\to\R$ satisfy the condition {\rm (F)} and $f, \p_x f\in C^{2\gamma,\gamma}(\R\times[0,T])$. 
\item Initial data. \\
Let $w_0 \in B^{2(1-1/p_1)}_{q,p_1}(\R_+)\cap C^{2+2\gamma}(\R_+)$, $w_0(0)=\alpha$ satisfy 
\[v_0(y):=\int_0^y w_0(\tilde{y})d\tilde{y}\]
\end{enumerate}
and $v_0$ belongs to $C^{2+2\gamma}(\R_+)$. 
Then there exists $T^\ast>0$ such that the equation \eqref{eq:v} admits a unique local-in-time classical solution $(v,s)$ in the class 
\[v \in C^{2+2\tilde{\gamma},1+\tilde{\gamma}}(\R_+\times[0,T^\ast]),\quad s\in C^{1+\tilde{\gamma}}[0,T^\ast]\]
and 
\begin{align*}
\p_y v &\in W^{1,p_1}(0,T^\ast;L^q(\R_+)) \cap L^{p_1}(0,T^\ast;W^{2,q}(\R_+))\cap C^{2+2\tilde{\gamma},1+\tilde{\gamma}}(\R_+\times[0,T^\ast]),\\
s&\in W^{1,p_2}(0,T^\ast), 
\end{align*}
where $\tilde{\gamma}=\min\{\gamma, \beta\}$. 

Furthermore, let $u(x,t)=v(x-s(t),t)$ and $u_0(x)=v_0(x)$. 
Then the function $u$ is the unique classical solution of the equation \eqref{eq:u}. 
\end{thm}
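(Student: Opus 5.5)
The plan is to assemble the existence part from Theorems \ref{thm:w}, \ref{thm:holder-v}, Corollary \ref{cor:hoelder2} and Theorem \ref{thm:identification}, and then prove uniqueness by reducing any classical solution back to the derivative system \eqref{eq:w}.

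\emph{Existence.} First, by (A), (F) and $w_0\in X_w$ with $w_0(0)=\alpha$, Theorem \ref{thm:w} yields $(w,s)\in E_w(0,T^\ast)\times E_s(0,T^\ast)$. Remark \ref{rem:reg}(3) and the first part of Theorem \ref{thm:identification} give $s'\in C^{\beta}$, so $s\in C^{1+\tilde\gamma}[0,T^\ast]$. In particular $s$ is Lipschitz, and Lemma \ref{lem:composition} makes $f(s(t)+y,t)$ and $\p_xf(s(t)+y,t)$ belong to $C^{2\gamma,\gamma}$.

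Next, since $v_0\in C^{2+2\gamma}$ and $\p_yv_0=w_0$ with $w_0(0)=\alpha$, Theorem \ref{thm:holder-v} gives $v\in C^{2+2\tilde\gamma,1+\tilde\gamma}$ solving \eqref{eq:v-holder}. Corollary \ref{cor:hoelder2} applies because $w_0\in C^{2+2\gamma}\subset C^{2+2\tilde\gamma}$, and Theorem \ref{thm:identification} then gives $w=\p_yv$. This transfers both the maximal-regularity class and the Hölder class to $\p_yv$.

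Now evaluate \eqref{eq:v-holder} at $y=0$, which is legitimate classically since $v\in C^{2,1}$ up to the boundary. Using the second equation of \eqref{eq:w} with $\p_yw(0,t)=\p_y^2v(0,t)$, we obtain $\p_tv(0,t)=\p_y^2v(0,t)+\alpha s'+f(s,t)=0$. Together with $v_0(0)=0$ this gives $v(0,t)=0$, so $(v,s)$ solves \eqref{eq:v}. Finally, setting $u(x,t)=v(x-s(t),t)$ and using $s\in C^1$, the chain rule reverses the computation of Section \ref{sec:reformation}, and $u$ is a classical solution of \eqref{eq:u}.

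\emph{Uniqueness.} Let $(\hat v,\hat s)$ be another solution in the stated class on $[0,T^\ast]$. By assumption $\hat w:=\p_y\hat v\in E_w\cap C^{2+2\tilde\gamma,1+\tilde\gamma}$, so we may differentiate the equation in $y$ classically. Then $\hat w$ satisfies the first, third and fourth lines of \eqref{eq:w}, and the free boundary law in \eqref{eq:v} becomes $\alpha\hat s'=-\p_y\hat w(0,t)-f(\hat s,t)$. Hence $(\hat w,\hat s)\in E_w\times E_s$ solves \eqref{eq:w}.

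The main obstacle is that Theorem \ref{thm:w} only gives uniqueness inside the ball $B_R(T^\ast)$, not among all solutions. I would first try to remove this restriction with a continuation argument. Let $t_0$ be the supremum of times up to which the two solutions coincide. Restarting the contraction of Theorem \ref{thm:w} at $t_0$ on a short interval $[t_0,t_0+\delta]$, with $R$ larger than the norms of both solutions there, the difference estimates of Proposition \ref{prop:product}(ii) force the two solutions to agree on that interval. The constants are uniform because the embedding constant is $T$-independent for differences vanishing at $t_0$. This contradicts maximality unless $t_0=T^\ast$, so $\hat s=s$ and $\hat w=w$. Then $\hat v-v$ solves \eqref{eq:v-holder} with zero data, and uniqueness in Theorem \ref{thm:holder-v} gives $\hat v=v$.

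Uniqueness for $u$ then follows from the same reduction $v(y,t)=u(s(t)+y,t)$. That reduction requires a classical solution $u$ of \eqref{eq:u} to have the corresponding regularity, which I take to be part of the meaning of ``classical solution'' here.
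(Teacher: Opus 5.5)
Your proposal is correct and follows the paper's route. Existence is assembled exactly as in the paper: Theorem \ref{thm:w}, Remark \ref{rem:reg}(3), Theorem \ref{thm:holder-v}, Corollary \ref{cor:hoelder2} and Theorem \ref{thm:identification}, then the boundary computation $\p_t v(0,t)=0$ together with $v_0(0)=0$. Uniqueness likewise reduces to the derivative system \eqref{eq:w} plus uniqueness for the linear Schauder problem. Your continuation argument is a correct and worthwhile addition: it upgrades the uniqueness in the ball $B_R(T^\ast)$, which is all the proof of Theorem \ref{thm:w} actually gives, to uniqueness among all solutions in $E_w(0,T^\ast)\times E_s(0,T^\ast)$, a step the paper leaves implicit.
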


Theorem \ref{thm:main} provides a local-in-time classical solution to a combustion-type free boundary problem with the prescribed boundary flux. 
The derivative formulation developed here is expected to be applicable to a broader class of free boundary problems with similar structural features. 

\begin{rem}[Positivity of solutions]\label{rem:positivity}
In addition to the hypotheses of Theorem \ref{thm:main}, assume 
\[
u_0(x)\ge0 \quad (x>0), \qquad
f(x,t)\ge0 \quad (x\in\R,\ 0\le t\le T^*),
\]
and 
\[\alpha>0.\]
Then the corresponding local-in-time classical solution $(u,s)$ obtained in
Theorem \ref{thm:main} satisfies
\[u(x,t)>0 \qquad (x>s(t),\ 0< t\le T^\ast)\]
by the standard parabolic maximum principle together with Hopf's boundary point lemma. 
See, for example, \cite{Fried, Lieb, PS0, PW}. 

If $\alpha<0$, the above positivity property may fail in general, even if $u_0\ge0$ and $f\ge0$. 
In this case, the local well-posedness result of Theorem \ref{thm:main} remains valid, but the standard maximum principle argument does not directly apply.
\end{rem}

\begin{rem}
As we note in Remark \ref{rem:reg}, the case $s(t)\equiv0$ is not excluded. 
However, it is possible to show $s(t)\not\equiv0$.  
In the case with $\alpha>0$ and $f\equiv0$, the assumption $\partial_{yy} v_0(0)>0$ implies that $s'(t)<0$ for sufficiently small $t>0$. 
\end{rem}

\section{The case of bounded interval with two free boundaries}\label{sec:bounded}
In contrast to the half-line case, the bounded interval setting allows two free boundaries and a time-dependent spatial scaling.
As a consequence, additional lower-order terms and coupling effects appear in the transformed system, which require a careful calculation developed in Sections \ref{sec:reformation}--\ref{sec:v}. 
The crucial difference from the half-line case is that on bounded intervals the spatial integrability of $w$ allows us to recover $v$ by indefinite integration, without relying on additional H\"older regularity. 
This simplification does not extend to unbounded intervals unless additional decay assumptions are imposed. 

\subsection{Formulation}
\label{subsection:bounded-formulation}

Let $s_{\pm,0}\in \R$ with $s_{-,0}<s_{+,0}$, and $s_\pm(t)$ be two unknown functions such that
\[s_-(t) < s_+(t), \qquad t \ge 0\]
and $s_\pm(0)=s_{\pm,0}$. 
We are concerned with the following free boundary problem:
\begin{equation}\label{eq:u-bounded}
	\begin{cases}
		\partial_t u - \partial_x^2 u = f(x,t),& s_-(t) < x < s_+(t),\ t>0,\\
		u(s_\pm(t),t)=0,& t>0,\\
		\partial_\nu u = -\alpha_\pm, &x=s_\pm(t),\ t>0,\\
		u(x,0)=u_0(x),& s_{-,0}<x<s_{+,0}, \\
		s_\pm(0)=s_{\pm,0}.&
	\end{cases}
\end{equation}
Here $\alpha_\pm>0$ are given constants.
Since the outward unit normal satisfies $\nu=-1$ at $x=s_-(t)$ and $\nu=+1$ at $x=s_+(t)$, the Neumann boundary conditions can be written as
\[
\partial_x u(s_-(t),t)=\alpha_-,
\qquad
\partial_x u(s_+(t),t)=-\alpha_+.
\]
Note that the signs of the boundary fluxes differ due to the orientation of outward normal vectors. 

Differentiating the Dirichlet boundary conditions
$u(s_\pm(t),t)=0$ with respect to time, we obtain
\[
\p_t u + s_\pm'(t)\p_x u = 0
\quad \text{at } x=s_\pm(t).
\]
By using the equation $\p_t u=\p_x^2 u+f$ and Neumann boundary conditions, this yields
\begin{equation}\label{eq:bc}
	\begin{cases}
		\alpha_- s_-'(t)= -\partial_{xx}u(s_-(t),t)-f(s_-(t),t), \\
		\alpha_+ s_+'(t)= \partial_{xx}u(s_+(t),t)+f(s_+(t),t).
	\end{cases}
\end{equation}

To fix the spatial domain, we introduce the change of variables
\[
y := \frac{x-s_-(t)}{s_+(t)-s_-(t)}, \qquad
v(y,t) := u\bigl((1-y)s_-(t)+y s_+(t),t\bigr),
\]
which maps the moving interval $(s_-(t),s_+(t))$ onto the fixed interval $(0,1)$.

A direct computation shows that
\[
\p_y v = (s_+(t)-s_-(t))\p_x u,
\qquad
\p_y^2 v = (s_+(t)-s_-(t))^2\p_x^2 u,
\]
and
\[\p_t v = \p_t u + ((1-y)s_-'(t)+y s_+'(t))\p_x u\quad(\Leftrightarrow \partial_t u= \p_t v - \frac{(1-y)s_-'(t)+y s_+'(t)}{s_+(t)-s_-(t)}\p_y v).\]

Therefore, the pair $(v,s_\pm)$ satisfies
\begin{equation}\label{eq:v-bounded}
	\begin{cases}
		\p_t v - (s_+(t)-s_-(t))^{-2}\p_y^2 v = \dfrac{(1-y)s_-'(t)+y s_+'(t)}{s_+(t)-s_-(t)}\p_y v \\
		\hspace{45mm}+ f\bigl((1-y)s_-(t)+y s_+(t),t\bigr),& 0<y<1,\ t>0,\\
		\alpha_- s_-'(t) = -(s_+(t)-s_-(t))^{-2}\p_y^2 v(0,t) -f(s_-(t),t),&t>0, \\
		\alpha_+ s_+'(t) = (s_+(t)-s_-(t))^{-2}\p_y^2 v(1,t) +f(s_+(t),t),&t>0, \\
		\p_y v(0,t)=\alpha_- (s_+(t)-s_-(t)), &t>0, \\
		\p_y v(1,t)=-\alpha_+ (s_+(t)-s_-(t)),& t>0,\\
		v(y,0)=v_0(y),& 0<y<1, \\
		s_\pm(0)=s_{\pm,0},&
	\end{cases}
\end{equation}
where we have set $v_0(y)=u_0((1-y)s_{-,0}+y s_{+,0})$. 
At a formal level, taking the trace at $y=0$ and $y=1$ yields $\p_t v(0,t)=\p_t v(1,t)=0$, which suggests that $v(0,t)=v(1,t)=0$ provided $v_0(0)=v_0(1)=0$.
This will be justified a posteriori once sufficient regularity is established.
Throughout this section, we assume $s_+(t)-s_-(t)$ remains strictly positive on a short time interval $[0,T]$, which will be guaranteed by the initial condition $s_{-,0}<s_{+,0}$ and continuity. 

\subsection{Derivative formulation}
\label{subsection:bounded-derivative}

As in the half-line case, the main difficulty in the system \eqref{eq:v-bounded} lies in the appearance of the second-order boundary traces $\p_y^2 v$.
These terms are not directly compatible with the standard maximal $L^p$-$L^q$ regularity theory.

To overcome this difficulty, we introduce a derivative formulation.
Set
\[w := \partial_y v.\]

Formally differentiating \eqref{eq:v-bounded} with respect to $y$, we obtain
\begin{equation}\label{eq:w-bounded}
	\begin{cases}
		\p_t w - (s_+(t)-s_-(t))^{-2}\p_y^2 w\\
		\hspace{10mm}= \p_y \biggl(\dfrac{(1-y)s_-'(t)+y s_+'(t)}{s_+(t)-s_-(t)} w\biggr) \\
		\hspace{15mm}+ (s_+(t)-s_-(t))\p_x f\bigl((1-y)s_-(t)+y s_+(t),t\bigr), & 0<y<1,\ t>0,\\
		\alpha_- s_-'(t) = -(s_+(t)-s_-(t))^{-2}\p_y w(0,t) -f(s_-(t),t),&t>0, \\
		\alpha_+ s_+'(t) = (s_+(t)-s_-(t))^{-2}\p_y w(1,t) +f(s_+(t),t),&t>0, \\
		w(0,t)=\alpha_- (s_+(t)-s_-(t)),&t>0, \\
		w(1,t)=-\alpha_+ (s_+(t)-s_-(t)),& t>0,\\
		w(y,0)=w_0(y)(:=\p_y v_0(y)),& 0<y<1, \\
		s_\pm(0)=s_{\pm,0}.&
	\end{cases}
\end{equation}
Here, we see 
\[\p_y \biggl(\frac{(1-y)s_-'(t)+y s_+'(t)}{s_+(t)-s_-(t)} w\biggr)=\frac{(1-y)s_-'(t)+y s_+'(t)}{s_+(t)-s_-(t)} \p_y w + \frac{s_+'(t)-s_-'(t)}{s_+(t)-s_-(t)}w.\] 

Observe that, at this point, the equivalence between \eqref{eq:v-bounded} and the derivative system \eqref{eq:w-bounded} has not yet been justified.
However, in contrast to the half-line case, the identification $w=\p_y v$ can be established on bounded intervals by means of indefinite integration. 
This argument relies on the integrability of $w$ over the spatial interval, which is available on the bounded interval but not in the half-line setting.

\subsection{Solvability for the $w$-problem \eqref{eq:w-bounded} and the original problem}
Let $1<p_1,p_2,q<\infty$ and $I:=(0,1)$. 
We define
\begin{align*}
F^b_w&:=L^{p_1}(0,T;L^q(I)), \\
E^b_w&:=W^{1,p_1}(0,T;L^q(I))\cap L^{p_1}(0,T;W^{2,q}(I)), \\
X^b_w&:=B^{2(1-1/p_1)}_{q,p_1}(I). 
\end{align*}

The estimates below are bounded-interval counterparts of Proposition \ref{prop:product}. 
The key point is that the denominator $\xi_+(t)-\xi_-(t)$ is uniformly bounded away from zero for sufficiently small time. 

\begin{prop}[Product and trace estimates]\label{prop:product_b}
Assume the condition {\rm (A)}. 
Then there exist $T_0>0$ and $C>0$, depending only on $p_1, p_2, q$ and the length of the initial spatial interval, such that the following estimates hold for all $T\in(0,T_0]$. 
\begin{enumerate}[label=\upshape(\roman*)]
\item Product estimates. \\
Let $\varphi\in E^b_w$ and $\xi_\pm\in E_s$ with $\xi_-(0)<\xi_+(0)$. 
Then 
\begin{align*}
\norm{\frac{(1-y)\xi_-'(t)+y \xi_+'(t)}{\xi_+(t)-\xi_-(t)}\p_y\varphi}_{F^b_w}
	&\le C T^{\frac{1}{p_1}-\frac{1}{p_2}}\bigl(\norm{\varphi}_{E^b_w}+\norm{\varphi|_{t=0}}_{X^b_w}\bigr)(\norm{\xi_+}_{E_s}+\norm{\xi_-}_{E_s}),\\
\norm{\frac{\xi_+'(t)-\xi_-'(t)}{\xi_+(t)-\xi_-(t)}w}_{F^b_w}
	&\le C T^{\frac{1}{p_1}-\frac{1}{p_2}}\bigl(\norm{\varphi}_{E^b_w}+\norm{\varphi|_{t=0}}_{X^b_w}\bigr)(\norm{\xi_+}_{E_s}+\norm{\xi_-}_{E_s}),\\
\norm{(\xi_+(t)-\xi_-(t))^{-2}\p_y\varphi|_{y=0}}_{F_s}
	&\le C T^{\frac{1}{p_2}}\bigl(\norm{\varphi}_{E^b_w}+\norm{\varphi|_{t=0}}_{X^b_w}\bigr), \\
	\norm{(\xi_+(t)-\xi_-(t))^{-2}\p_y\varphi|_{y=1}}_{F_s}
	&\le C T^{\frac{1}{p_2}}\bigl(\norm{\varphi}_{E^b_w}+\norm{\varphi|_{t=0}}_{X^b_w}\bigr). 
\end{align*}
 \item Difference estimates (contraction properties).\\
Let $\varphi_1, \varphi_2\in E^b_w$ with $\varphi_1|_{t=0}=\varphi_2|_{t=0}$ and $\xi_{1,\pm}, \xi_{2,\pm}\in E_s$ with $\xi_{1,-}(0)<\xi_{1,+}(0)$, $\xi_{1,\pm}(0)=\xi_{2,\pm}(0)$. 
Then 
\begin{align*}
&\norm{\frac{(1-y)\xi_{1,-}'(t)+y \xi_{1,+}'(t)}{\xi_{1,+}(t)-\xi_{1,-}(t)}\p_y\varphi_1-\frac{(1-y)\xi_{2,-}'(t)+y \xi_{2,+}'(t)}{\xi_{2,+}(t)-\xi_{2,-}(t)}\p_y\varphi_2}_{F^b_w}\\
	&\le C T^{\frac{1}{p_1}-\frac{1}{p_2}}\biggl(\bigl(\norm{\varphi_1}_{E^b_w}+\norm{\varphi_1|_{t=0}}_{X^b_w}\bigr)(\norm{\xi_{1,+}-\xi_{2,+}}_{E_s}+\norm{\xi_{1,-}-\xi_{2,-}}_{E_s}) \\
	&\qquad + \norm{\varphi_1-\varphi_2}_{E^b_w}(\norm{\xi_{2,+}}_{E_s}+\norm{\xi_{2,-}}_{E_s})\biggr),\\
&\norm{\frac{\xi_{1,+}'(t)-\xi_{1,-}'(t)}{\xi_{1,+}(t)-\xi_{1,-}(t)}\varphi_1 - \frac{\xi_{2,+}'(t)-\xi_{2,-}'(t)}{\xi_{2,+}(t)-\xi_{2,-}(t)}\varphi_2}_{F^b_w}\\
	& \le C T^{\frac{1}{p_1}-\frac{1}{p_2}}\biggl(\bigl(\norm{\varphi_1}_{E^b_w}+\norm{\varphi_1|_{t=0}}_{X^b_w}\bigr)(\norm{\xi_{1,+}-\xi_{2,+}}_{E_s}+\norm{\xi_{1,-}-\xi_{2,-}}_{E_s})\\
	&\qquad +\norm{\varphi_1-\varphi_2}_{E^b_w}(\norm{\xi_{2,+}}_{E_s}+\norm{\xi_{2,-}}_{E_s})\biggr),\\
&\norm{(\xi_{1,+}(t)-\xi_{1,-}(t))^{-2}\p_y\varphi_1|_{y=0} - (\xi_{2,+}(t)-\xi_{2,-}(t))^{-2}\p_y\varphi_2|_{y=0}}_{F_s}\\
	&\le C T^{\frac{1}{p_2}}\biggl(\norm{\varphi_1-\varphi_2}_{E^b_w}+(\norm{\varphi_1}_{E^b_w}+\norm{\varphi_1|_{t=0}}_{X^b_w})(\norm{\xi_{1,+}-\xi_{2,+}}_{E_s}+\norm{\xi_{1,-}-\xi_{2,-}}_{E_s})\biggr), \\
&\norm{(\xi_{1,+}(t)-\xi_{1,-}(t))^{-2}\p_y\varphi_1|_{y=1} - (\xi_{2,+}(t)-\xi_{2,-}(t))^{-2}\p_y\varphi_2|_{y=1}}_{F_s}\\
	&\le C T^{\frac{1}{p_2}}\biggl(\norm{\varphi_1-\varphi_2}_{E^b_w}+(\norm{\varphi_1}_{E^b_w}+\norm{\varphi_1|_{t=0}}_{X^b_w})(\norm{\xi_{1,+}-\xi_{2,+}}_{E_s}+\norm{\xi_{1,-}-\xi_{2,-}}_{E_s})\biggr).
\end{align*}
\end{enumerate}
\end{prop}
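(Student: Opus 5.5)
The plan is to reduce everything to the argument of Proposition \ref{prop:product}, with one extra ingredient: a uniform lower bound on the denominator $\xi_+(t)-\xi_-(t)$. Put $d_0:=\xi_+(0)-\xi_-(0)>0$. By H\"older's inequality, for $\xi\in E_s$ we have
\[
|\xi(t)-\xi(0)|\le t^{1-1/p_2}\norm{\xi'}_{L^{p_2}(0,T)}.
\]
Hence, as long as the relevant $E_s$-norms stay in a bounded ball (the setting in which the estimates are applied in the fixed point argument), there is $T_0>0$ such that
\[
\xi_+(t)-\xi_-(t)\ge d_0/2 \quad\text{for } t\in[0,T_0].
\]
As a result, $(\xi_+-\xi_-)^{-1}$ and $(\xi_+-\xi_-)^{-2}$ are bounded by constants depending only on $d_0$. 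I expect $T_0$ to depend on the size of that ball as well as on $d_0$; I would state this explicitly.

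For the first two product estimates I would argue as follows.
\begin{itemize}
\item Since $|(1-y)\xi_-'+y\xi_+'|\le|\xi_-'|+|\xi_+'|$ for $y\in I$, and the denominator is bounded below, the spatial $L^q(I)$-norm is controlled by $C(|\xi_-'(t)|+|\xi_+'(t)|)\norm{\varphi(t)}_{W^{1,q}(I)}$.
\item H\"older in time then gives a factor $\norm{\varphi}_{L^\infty(0,T;W^{1,q}(I))}\,T^{1/p_1-1/p_2}\norm{\xi_\pm'}_{L^{p_2}}$.
\item Exactly as in Proposition \ref{prop:product}, I split $\varphi=(\varphi-\tilde\varphi)+\tilde\varphi$. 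Here $\tilde\varphi$ is an extension in $E^b_w$ of $\varphi|_{t=0}$, for instance $e^{t\Delta_D}$ applied to a bounded extension of $\varphi|_{t=0}$ to $\R_+$ or $\R$ and then restricted to $I$ (a Besov extension operator from $I$ exists). The key input is the trace embedding $E^b_w\hookrightarrow BUC([0,T];X^b_w)$, with a $T$-independent constant for functions vanishing at $t=0$, together with $X^b_w\subset W^{1,q}(I)\cap C^1(\bar I)$ under (A).
\end{itemize}
The zeroth-order term $\frac{\xi_+'-\xi_-'}{\xi_+-\xi_-}\varphi$ (the statement writes $w$; I read it as $\varphi$) is handled identically. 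The trace terms at $y=0,1$ follow from $X^b_w\subset BUC^1(\bar I)$ and H\"older in time, which gives $T^{1/p_2}$.

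For the difference estimates I would telescope. For the first one, write
\[
a_1\p_y\varphi_1-a_2\p_y\varphi_2=(a_1-a_2)\p_y\varphi_1+a_2\p_y(\varphi_1-\varphi_2),
\]
where $a_i$ denotes the coefficient. The second piece is treated as above, using that $\varphi_1-\varphi_2$ vanishes at $t=0$, so no $X^b_w$ term appears. For the first piece I would expand
\[
a_1-a_2=\frac{N_1-N_2}{D_1}+N_2\,\frac{D_2-D_1}{D_1D_2}.
\]
The term $N_1-N_2$ is bounded pointwise by $|\xi_{1,\pm}'-\xi_{2,\pm}'|$. Because $\xi_{1,\pm}(0)=\xi_{2,\pm}(0)$, the Poincar\'e-type bound from the proof of Theorem \ref{thm:w} gives $\norm{D_1-D_2}_{L^\infty(0,T)}\le CT^{1-1/p_2}\sum_\pm\norm{\xi_{1,\pm}-\xi_{2,\pm}}_{E_s}$. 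The piece $N_2(D_2-D_1)/(D_1D_2)$ then contributes an extra small factor times $\norm{\xi_2'}_{L^{p_2}}$. This factor is bounded on the ball, so it can be absorbed into $C$.

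The trace differences are split in the same way:
\[
D_1^{-2}\p_y\varphi_1-D_2^{-2}\p_y\varphi_2=(D_1^{-2}-D_2^{-2})\p_y\varphi_1+D_2^{-2}\p_y(\varphi_1-\varphi_2),
\]
using $|D_1^{-2}-D_2^{-2}|\le C|D_1-D_2|$.

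The main obstacle is bookkeeping rather than analysis. The cross terms, such as $N_2(D_2-D_1)$, produce products of $E_s$-norms that do not appear literally in the stated right-hand side. They can only be absorbed into $C$ if $C$ and $T_0$ may also depend on a bound $R$ for the $E_s$-norms, as in $B_R(T)$. The lower bound on $\xi_{2,+}-\xi_{2,-}$ similarly requires such a bound. So I would first verify that making $C$ and $T_0$ depend on $R$ is acceptable; otherwise I would phrase the estimates for $(\xi_\pm)$ in a ball of $E_s$ around the initial data.
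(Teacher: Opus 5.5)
Your proposal is correct and follows essentially the same route as the paper: a lower bound $\xi_+-\xi_-\ge \underline{L}>0$ on a short time interval, then the H\"older-in-time and trace-embedding argument of Proposition \ref{prop:product}, with the difference estimates coming from Lipschitz continuity of $(\xi_+,\xi_-)\mapsto(\xi_+-\xi_-)^{-1}$ on $\{\xi_+-\xi_-\ge\underline{L}\}$. Your caveat is well founded, since the paper's own proof chooses $T$ depending on $\xi_\pm$ and says the bounds depend on $\underline{L}$, $\overline{L}$ and $\norm{\xi_\pm'}_{L^{p_2}(0,T)}$, so $C$ and $T_0$ do depend on an $E_s$-bound $R$ as in $B_R^b(T)$; reading the $w$ in the second estimate as $\varphi$ is also the right correction.
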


\begin{proof}
The proof is parallel to that of Proposition~3.1, and we only indicate the
necessary modifications.

We may assume that $\xi_\pm \in E_s$ are given functions such that
\[0 < \underline{L} \le \xi_+(t)-\xi_-(t) \le \overline{L} \quad \text{for all } t\in[0,T],\]
for some constants $\underline{L},\overline{L}>0$ and for some $T>0$ because of $\xi_-(0)<\xi_+(0)$ and the embedding $E_s\subset C[0,T]$.
Then the coefficient
\[\frac{1}{\xi_+(t)-\xi_-(t)}\]
is uniformly positive and bounded on $(0,T)$.

Moreover, the coefficients
\[\frac{(1-y)\xi_-'(t)+y \xi_+'(t)}{\xi_+(t)-\xi_-(t)}, \quad \frac{\xi_+'(t)-\xi_-'(t)}{\xi_+(t)-\xi_-(t)}\]
belong to
\[L^{p_2}\bigl(0,T;W^{1,\infty}(I)\bigr),\]
with bounds depending only on $\underline{L}$, $\overline{L}$, and $\norm{\xi_\pm'}_{L^p(0,T)}$.
The difference estimates are obtained in the same way as in Proposition \ref{prop:product}, using the Lipschitz continuity of the maps 
\[(\xi_+, \xi_-)\mapsto (\xi_+(t)-\xi_-(t))^{-1}\]
on the set $\{\xi_+(t)-\xi_-(t)\ge \underline{L}\}$. 
\end{proof}

\begin{thm}[Local existence for $(w,s_\pm)$]\label{thm:w_b}
Assume the conditions {\rm (A)} and {\rm (F)}. 
Let 
\[w_0\in X^b_w, \qquad w_0(0)=\alpha_-(s_{+,0}-s_{-,0}), \quad w_0(1)=-\alpha_+(s_{+,0}-s_{-,0}),\]
with $s_{-,0}<s_{+,0}$. 
Then there exists $T^\ast > 0$ such that the derivative system \eqref{eq:w-bounded} admits a unique local-in-time solution in the maximal regularity class 
\[(w, s_\pm)\in E^b_w(0,T^\ast)\times E_s(0,T^\ast)^2\]
satisfying $s_+(t)-s_-(t)>0$ for all  $t\in[0,T^\ast]$ and 
\[\norm{w}_{E^b_w(0,T^\ast)} + \norm{s_+}_{E_s(0,T^\ast)} + \norm{s_-}_{E_s(0,T^\ast)}\leq  C_{T^\ast, f, \alpha_\pm} + C\norm{w_0}_{X^b_w}\]
for some constants $C_{T^\ast, f, \alpha_\pm}$ and $C$. 
\end{thm}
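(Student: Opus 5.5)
We follow the proof of Theorem \ref{thm:w} and use the Banach fixed point theorem in $E^b_w(0,T)\times E_s(0,T)^2$. First we fix a lower bound for the interval length. Set $L_0:=s_{+,0}-s_{-,0}>0$ and, given $R>0$, define $B^b_R(T)$ as the set of $(\varphi,\xi_+,\xi_-)$ with
\[
\varphi|_{t=0}=w_0,\qquad \xi_\pm(0)=s_{\pm,0},\qquad \varphi(0,t)=\alpha_-(\xi_+-\xi_-),\qquad \varphi(1,t)=-\alpha_+(\xi_+-\xi_-),
\]
and with total norm at most $R$. For such triples,
\[
|\xi_\pm(t)-s_{\pm,0}|\le T^{1-1/p_2}\norm{\xi_\pm'}_{L^{p_2}}\le T^{1-1/p_2}R .
\]
Hence for $T\le T_0(R)$ we have $L_0/2\le \xi_+-\xi_-\le 2L_0$. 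This puts us in the setting of Proposition \ref{prop:product_b} with $\underline L=L_0/2$ and $\overline L=2L_0$. Since the constraint $\varphi|_{y=0,1}$ is part of the set, it is safer to let the mapping produce the boundary data from the new $s_\pm$ rather than from $\xi_\pm$, so that the set is mapped into itself.

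Next we define the solution map $\mathcal S(\varphi,\xi_\pm)=(w,s_\pm)$ in two stages. First we solve the two ODEs
\[
\alpha_\mp s_\mp'=\mp(\xi_+-\xi_-)^{-2}\p_y\varphi|_{y=0,1}\mp f(\xi_\mp,t),\qquad s_\pm(0)=s_{\pm,0},
\]
by direct integration. Then we solve the linear heat problem on $I$:
\[
\p_tw-L_0^{-2}\p_y^2w=\bigl[(\xi_+-\xi_-)^{-2}-L_0^{-2}\bigr]\p_y^2\varphi+\p_y(b\varphi)+(\xi_+-\xi_-)\,\p_xf\bigl((1-y)\xi_-+y\xi_+,t\bigr),
\]
where $b$ is the transport coefficient. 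The boundary data are $w(0,t)=\alpha_-(s_+-s_-)$ and $w(1,t)=-\alpha_+(s_+-s_-)$, and the initial datum is $w_0$. The compatibility conditions at $t=0$ hold by the hypotheses on $w_0(0)$ and $w_0(1)$. Freezing the top-order coefficient at $L_0^{-2}$ gives a constant-coefficient operator, to which maximal $L^{p_1}$-$L^q$ regularity with inhomogeneous Dirichlet data applies \cite{PS,DHP}. The boundary data $\alpha_\pm(s_+-s_-)$ lie in $W^{1,p_2}(0,T)$, which embeds into the required trace space $W^{1-1/(2q),p_1}(0,T)$ because $p_2>p_1$. The lower-order terms are handled by Proposition \ref{prop:product_b}. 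The forcing terms are treated exactly as in Theorem \ref{thm:w}, using (F) together with the chain rule for the composition with the affine map $y\mapsto(1-y)\xi_-+y\xi_+$. The extra factor of $\xi_+-\xi_-$ is bounded, and its differences are controlled by $T\norm{\xi_1-\xi_2}_{E_s}$ through the Poincaré-type inequality.

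The main obstacle is the new quasilinear top-order term $[(\xi_+-\xi_-)^{-2}-L_0^{-2}]\p_y^2\varphi$, which has no counterpart on the half-line. We would bound it by
\[
\sup_t\bigl|(\xi_+-\xi_-)^{-2}-L_0^{-2}\bigr|\,\norm{\varphi}_{E^b_w}\le C\,T^{1-1/p_2}R\,\norm{\varphi}_{E^b_w},
\]
which gains a positive power of $T$. For differences we split
\[
a_1\p_y^2\varphi_1-a_2\p_y^2\varphi_2=(a_1-a_2)\p_y^2\varphi_1+a_2\,\p_y^2(\varphi_1-\varphi_2).
\]
The first piece is bounded by $CT^{1-1/p_2}R\norm{\xi_1-\xi_2}_{E_s}$ using $\xi_{1,\pm}(0)=\xi_{2,\pm}(0)$. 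The second is bounded by $CT^{1-1/p_2}R\norm{\varphi_1-\varphi_2}_{E^b_w}$. Both terms are therefore small for small $T$.

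The remaining delicate point is that the boundary data of $w_1-w_2$ do not vanish. We handle this with a lifting. The difference of the data, $(\alpha_-,-\alpha_+)(s_{1,+}-s_{1,-}-s_{2,+}+s_{2,-})$, vanishes at $t=0$. Its $W^{1-1/(2q),p_1}$ norm is therefore dominated, with a $T$-independent constant, by the $E_s$ norm of the differences, and this is in turn bounded via the ODE estimates, which carry the factor $T^{1/p_2}$. Choosing $R$ large and then $T^\ast$ small gives the self-map and contraction properties, and hence a unique fixed point. The a priori bound follows from the self-map estimate, and positivity $s_+-s_->0$ on $[0,T^\ast]$ follows from the first step.
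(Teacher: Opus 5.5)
Your proposal is correct and follows essentially the same route as the paper: a Banach fixed point for the frozen-coefficient system with principal part $\p_t-(s_{+,0}-s_{-,0})^{-2}\p_y^2$, where the quasilinear remainder $\bigl((\xi_+-\xi_-)^{-2}-(s_{+,0}-s_{-,0})^{-2}\bigr)\p_y^2\varphi$ is absorbed via the factor $T^{1-1/p_2}$, and the lower-order and trace terms are controlled by Proposition \ref{prop:product_b}. The only deviation is cosmetic: you feed the updated $s_\pm$ into the Dirichlet data because you built the boundary constraint into the ball, whereas the paper uses $\xi_\pm$ there and imposes only the compatibility condition at $t=0$; your explicit handling of the boundary-data lifting and of the top-order difference spells out what the paper leaves as ``similarly''.
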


\begin{proof}
The proof is based on the same strategy as Theorem \ref{thm:w}, using the uniform positivity of $s_+(t)-s_-(t)$.
The positivity of $s_+(t)-s_-(t)$ follows from the continuity of $s_\pm\in E_s(0,T)\subset C[0,T]$ and the fact that $s_{-,0}<s_{+,0}$. 
We sketch the argument and indicate the necessary modifications.

We consider the closed subset 
\begin{align*}
&B_R^b(T) \\
&:= \left\{(w,s_\pm)\in E_w^b(0,T)\times E_s(0,T)^2~\middle|~
  \begin{aligned} 
&w_0(0)=\alpha_-(s_{+,0}-s_{-,0}),\\
&w_0(1)=-\alpha_+(s_{+,0}-s_{-,0}),\\
&w|_{t=0}=w_0,\ s_\pm(0)=s_{\pm,0},\\
&\norm{w}_{E_w^b(0,T)}+\norm{s_+}_{E_s(0,T)}+\norm{s_-}_{E_s(0,T)}\le R
  \end{aligned}
  \right\}
\end{align*}
endowed with the natural norm, where $T>0$ and $R>0$ are fixed later.

We define the solution mapping 
\begin{align*}
\begin{array}{rccc}
\mathcal{S}^b:&B_R^b(T)&\longrightarrow&E_w^b(0,T) \times E_s(0,T)^2\\
        & \rotatebox{90}{$\in$}&               & \rotatebox{90}{$\in$} \\
        &(\varphi,\xi_\pm)& \longmapsto   & (w,s_\pm)
\end{array}
\end{align*}
to
\begin{equation*}
	\begin{cases}
		\p_t w - (s_{+,0}-s_{-,0})^{-2}\p_y^2 w \\
		\quad =\biggl((\xi_+(t)-\xi_-(t))^{-2}-(s_{+,0}-s_{-,0})^{-2}\biggr)\p_y^2 \varphi  &\\
			\qquad + \p_y \biggl(\dfrac{(1-y)\xi_-'(t)+y \xi_+'(t)}{\xi_+(t)-\xi_-(t)} \varphi\biggr)&\\
			\qquad + (\xi_+(t)-\xi_-(t))\p_x f\bigl((1-y)\xi_-(t)+y \xi_+(t),t\bigr), & 0<y<1,\ t>0,\\
		\alpha_- s_-'(t) = -(\xi_+(t)-\xi_-(t))^{-2}\p_y \varphi(0,t) -f(\xi_-(t),t)&t>0, \\
		\alpha_+ s_+'(t) = (\xi_+(t)-\xi_-(t))^{-2}\p_y \varphi(1,t) +f(\xi_+(t),t)&t>0, \\
		w(0,t)=\alpha_- (\xi_+(t)-\xi_-(t)), &t>0, \\
		 w(1,t)=-\alpha_+ (\xi_+(t)-\xi_-(t)),& t>0,\\
		w(y,0)=w_0(y),& 0<y<1, \\
		s_\pm(0)=s_{\pm,0}. &
	\end{cases}
\end{equation*}
We treat the linear system with frozen coefficients. 
In addition to Proposition \ref{prop:product_b}, we use the following estimates: 
\begin{align*}
\norm{\biggl((\xi_+(t)-\xi_-(t))^{-2}-(s_{+,0}-s_{-,0})^{-2}\biggr)\p_y^2 \varphi}_{F_w^b}&\le  CT^{1-\frac{1}{p_2}}(\norm{\xi_+}_{E_s}+\norm{\xi_-}_{E_s})\norm{\varphi}_{E_w^b}, \\
\norm{(\xi_+(t)-\xi_-(t))\p_x f\bigl((1-y)\xi_-(t)+y \xi_+(t),t\bigr)}_{F_w^b}&\le C\norm{\p_x f}_{L^{p_1}(0,T; L^q(\R))}, \\
\norm{\alpha_\pm (\xi_+(t)-\xi_-(t))}_{E_w^b}&\le C|\alpha_\pm| T^{\frac{1}{p_1}-\frac{1}{p_2}}\norm{\xi_+-\xi_-}_{E_s}, 
\end{align*}
where the first estimate comes from 
\begin{align*}
\left|(\xi_+(t)-\xi_-(t))^{-2}-(s_{+,0}-s_{-,0})^{-2}\right| &\le C(|\xi_+(t)-s_{+,0}|+|\xi_-(t)-s_{-,0}|)\\
&\le CT^{1-\frac{1}{p_2}}(\norm{\xi_+}_{E_s}+\norm{\xi_-}_{E_s})
\end{align*}
and the third estimate comes from the H\"older inequality. 
Collecting the above estimates, we can prove $\mathcal{S}^b$ is the self-map for sufficiently small $T>0$. 
Similarly, we can prove the contraction on $B_R^b(T)$.
The fixed point $(w,s_\pm)\in B_R^b(T)$ satisfies the equation \eqref{eq:w-bounded}. 
This completes the proof. 
\end{proof}
The proof shows that the bounded interval case differs from the half-line case only by the presence of additional lower-order terms and some scaling terms $\xi_+(t)-\xi_-(t)$, which are controlled by the same maximal regularity framework.

We define
\[v(y,t):=\int_0^y w(\tilde{y},t)\,d\tilde{y}.\]
We have the identification $w=\partial_y v$ and $v(0,t)=0$. 
Moreover, from the equation \eqref{eq:w-bounded}, we see  
\[\frac{d}{dt}\int_0^1 w(\tilde{y},t)d\tilde{y}=0.\]
Therefore, if we assume $\int_0^1 w_0(\tilde{y})d\tilde{y}=0$, then $v(1,t)=\int_0^1 w(\tilde{y},t)d\tilde{y}=\int_0^1 w_0(\tilde{y})d\tilde{y}=0$. 
Thus, we prove the following theorem on the bounded interval. 

\begin{thm}[Solvability for the problems on the bounded interval]\label{thm:main2}
Assume that $\alpha_\pm\neq0$ and the conditions {\rm (A)}, {\rm (F)} hold. 
Let 
\begin{align*}
&w_0 \in B^{2(1-1/p_1)}_{q,p_1}(I),\quad \int_0^1 w_0(\tilde{y})d\tilde{y}=0, \\
&w_0(0)=\alpha_-(s_{+,0}-s_{-,0}), \quad w_0(1)=-\alpha_+(s_{+,0}-s_{-,0})
\end{align*}
with $s_{-,0}<s_{+,0}$ and define
\[v_0(y):=\int_0^y w_0(\tilde{y})d\tilde{y}. \]
Then there exists $T^\ast>0$ such that the equation \eqref{eq:v-bounded} admits a unique local-in-time solution $(v,s)$ in the maximal regularity class 
\begin{align*}
v &\in W^{1,p_1}(0,T^\ast;W^{1,q}(I)) \cap L^{p_1}(0,T^\ast;W^{3,q}(I)), \\
s &\in W^{1,p_2}(0,T^\ast). 
\end{align*}

Furthermore, let $u(x,t)=v\Bigl(\dfrac{x-s_-(t)}{s_+(t)-s_-(t)},t\Bigr)$ and $u_0(x)=v_0\Bigl(\dfrac{x-s_{-,0}}{s_{+,0}-s_{-,0}}\Bigr)$. 
Then the function $u$ is the unique solution of the equation \eqref{eq:u-bounded}. 
\end{thm}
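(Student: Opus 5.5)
The plan is to derive the statement from Theorem \ref{thm:w_b} together with the indefinite-integration construction described just before the statement. There are three parts: existence for \eqref{eq:v-bounded}, uniqueness, and the transfer back to \eqref{eq:u-bounded}.

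\emph{Existence.} Take $(w,s_\pm)\in E^b_w(0,T^\ast)\times E_s(0,T^\ast)^2$ from Theorem \ref{thm:w_b} and set $v(y,t):=\int_0^y w(\tilde y,t)\,d\tilde y$.
\begin{itemize}
\item Regularity. Since $\p_y$ and integration in $y$ map between $W^{k,q}(I)$ and $W^{k+1,q}(I)$ boundedly on the bounded interval, and integration commutes with $\p_t$, we get $v\in W^{1,p_1}(0,T^\ast;W^{1,q}(I))\cap L^{p_1}(0,T^\ast;W^{3,q}(I))$.
\item Conservation. Integrate the $w$-equation of \eqref{eq:w-bounded} over $(0,1)$. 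The diffusion term gives $(s_+-s_-)^{-2}(\p_y w(1,t)-\p_y w(0,t))$. The divergence term gives $[\frac{(1-y)s_-'+ys_+'}{s_+-s_-}w]_0^1=\frac{s_+'w(1,t)-s_-'w(0,t)}{s_+-s_-}=-\alpha_+s_+'-\alpha_-s_-'$, using the Dirichlet data for $w$. The forcing term gives $f(s_+(t),t)-f(s_-(t),t)$. By the two ODEs in \eqref{eq:w-bounded} these contributions cancel, so $\frac{d}{dt}\int_0^1 w\,d\tilde y=0$ in $L^{p_1}(0,T^\ast)$. Hence $v(1,t)=\int_0^1 w_0=0$.
\item Equation for $v$. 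Integrate the $w$-equation from $0$ to $y$. This yields the first line of \eqref{eq:v-bounded} up to a function $c(t)$ collecting the boundary contributions at $y=0$, namely $c(t)=-(s_+-s_-)^{-2}\p_y w(0,t)-s_-'(t)w(0,t)/(s_+-s_-)-f(s_-(t),t)$. Using $w(0,t)=\alpha_-(s_+-s_-)$ and the $s_-$-ODE, $c(t)=\alpha_-s_-'-\alpha_-s_-'=0$.
\item Remaining conditions. The ODEs in \eqref{eq:v-bounded} hold because $\p_y^2v=\p_y w$ has traces in $F_s$. The Neumann data hold because $\p_y v=w$.
\end{itemize}

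\emph{Uniqueness.} Let $(\tilde v,\tilde s_\pm)$ be another solution in the stated class. Then $\tilde w:=\p_y\tilde v\in E^b_w$, and differentiating the $\tilde v$-equation in $y$ (legitimate in the distributional sense for this class) shows that $(\tilde w,\tilde s_\pm)$ solves \eqref{eq:w-bounded}. By the uniqueness part of Theorem \ref{thm:w_b}, possibly after shrinking $T^\ast$ so that both lie in a common ball $B^b_R$ (or via a continuation argument), $\tilde w=w$ and $\tilde s_\pm=s_\pm$. It remains to show $\tilde v=v$. Both solve the $v$-equation with the same right-hand side depending on $w$, so $\p_t(\tilde v-v)=0$. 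Together with $\tilde v|_{t=0}=v_0$ this gives equality.

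\emph{Back to $u$.} Define $u$ by the inverse change of variables, which is a bi-Lipschitz and sufficiently smooth transformation since $s_+-s_->0$. Then $u$ satisfies the heat equation with the Neumann conditions, and $u(s_\pm(t),t)=v(0\text{ or }1,t)=0$ by the conservation step. Uniqueness for \eqref{eq:u-bounded} transfers through the same transformation.

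\emph{Main obstacle.} The delicate step is the uniqueness of $(w,s_\pm)$ in the full class rather than only in the ball $B^b_R(T)$. I would handle it by restarting the contraction argument with $R$ chosen large enough to contain both solutions on a short interval, and then extending the coincidence set by a continuity argument on $[0,T^\ast]$.
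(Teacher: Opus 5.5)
Your proposal is correct and follows the same route as the paper. You take $(w,s_\pm)$ from Theorem \ref{thm:w_b}, recover $v$ by indefinite integration in $y$, and use the conservation law $\frac{d}{dt}\int_0^1 w\,d\tilde y=0$ together with $\int_0^1 w_0=0$ to obtain $v(1,t)=0$; your checks that the integration constant $c(t)$ vanishes, and your continuation argument for uniqueness outside the ball $B^b_R(T)$, supply details the paper leaves implicit.
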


\begin{rem}[Positivity of solutions]\label{rem:positivity2}
Under the assumptions 
\[
u_0(x)\ge0 \quad (s_{-,0}<x<s_{+,0}), \qquad
f(x,t)\ge0 \quad (x\in\R,\ 0\le t\le T^*),
\]
and 
\[\alpha_\pm>0,\]
the positivity of the solution $u$ in Theorem \ref{thm:main2} follows from the parabolic maximum principle together with Hopf's boundary point lemma, as in Remark \ref{rem:positivity}. 
\end{rem}

\paragraph{Acknowledgements}
The work of the first author was partially supported by JSPS Grant-in-Aid for Early-Career Scientists JP22K13948. 
The work of the second author was partly supported by JSPS KAKENHI Grant Numbers JP19H00639, JP20K20342, JP24K00531 and JP24H00183 and by Arithmer Inc., Daikin Industries, Ltd.\ and Ebara Corporation through collaborative grants. 
The work of the third author was partially supported by JSPS KAKENHI Grant Number JP20K14350.

\section*{Declarations}
\textbf{Data Availability}
No data was used for the research described in the article. \\
\textbf{Conflict of interest}
The authors declare that they have no conflict of interest.

\bigskip

\noindent
(K.~Furukawa)
{\it Email address}: \href{mailto:furukawa@sci.u-toyama.ac.jp}{\nolinkurl{furukawa@sci.u-toyama.ac.jp}}\\
(Y.~Giga)
{\it Email address}: \href{mailto:labgiga@ms.u-tokyo.ac.jp}{\nolinkurl{labgiga@ms.u-tokyo.ac.jp}}\\
(N.~Kajiwara)
{\it Email address}: \href{mailto:kajiwara.naoto.p4@f.gifu-u.ac.jp}{\nolinkurl{kajiwara.naoto.p4@f.gifu-u.ac.jp}}

\end{document}